   \edef\Gin@extensions{\Gin@extensions,.mps}
\tikzset{
  column sep/.code=\def\pgfmatrixcolumnsep{\pgf@matrix@xscale*(#1)},
  row sep/.code   =\def\pgfmatrixrowsep{\pgf@matrix@yscale*(#1)},
  matrix xscale/.code=%
    \pgfmathsetmacro\pgf@matrix@xscale{\pgf@matrix@xscale*(#1)},
  matrix yscale/.code=%
    \pgfmathsetmacro\pgf@matrix@yscale{\pgf@matrix@yscale*(#1)},
  matrix scale/.style={/tikz/matrix xscale={#1},/tikz/matrix yscale={#1}}}
\def\pgf@matrix@xscale{1}
\def\pgf@matrix@yscale{1}
\newtheorem{theorem}{Theorem}
\newtheorem{corollary}{Corollary}
\newtheorem{proposition}{Proposition}[section]
\newtheorem*{claim*}{Claim}
\newtheorem*{theorem*}{Theorem}
\newtheorem*{corollary*}{Corollary}
\theoremstyle{definition}
\newtheorem{definition}{Definition}[section]
\newtheorem{example}{Example}[section]
\newtheorem{notation}{Notation}[section]
\theoremstyle{remark}
\newtheorem{remark}{Remark}[section]
\theoremstyle{remark}
\newtheorem{problem}{Problem}
\newcommand {\hide}[1]{}
\newcommand {\junk}[1]{}
\newtheorem{algorithm}{\sc Algorithm}
\newcommand {\R} {\mathrm{R}}
\newcommand {\D}     {\mbox{\rm D}}
\newcommand {\Z}  {\mathbb{Z}}
\newcommand {\Q}         {\mathbb{Q}}
\newcommand {\RR} {{\mathcal R}}
\newcommand {\la}   {{\langle}}
\newcommand {\ra}   {{\rangle}}
\newcommand {\eps} {{\varepsilon}}
\newcommand {\E} {{\rm Ext}}
\newcommand{\card}{\mathrm{card}}
\newcommand{\rank}{\mathrm{rank}}
\def\addots{\mathinner{\mkern1mu
		\raise1pt\vbox{\kern7pt\hbox{.}}
		\mkern2mu\raise4pt\hbox{.}\mkern2mu
		\raise7pt\hbox{.}\mkern1mu}}
\newcommand{\HH}  {\mbox{\rm H}}
\newcommand{\level}{\mathrm{level}}
\newcommand{\Top}{\mathbf{Top}}
\algnewcommand\algorithmicinput{\textbf{Input:}}
\algnewcommand\INPUT{\item[\algorithmicinput]}
\algnewcommand\algorithmicoutput{\textbf{Output:}}
\algnewcommand\OUTPUT{\item[\algorithmicoutput]}
\algnewcommand\algorithmicproc{\textbf{Procedure:}}
\algnewcommand\PROCEDURE{\item[\algorithmicproc]}
\algnewcommand\algorithmiccomplexity{\textbf{Complexity:}}
\algnewcommand\COMPLEXITY{\item[\algorithmiccomplexity]}
\newcommand{\tcyl}{\widetilde{\mathrm{{cyl}}}}
\newcommand{\cyl}{\mathrm{cyl}}
\newcommand{\Simp}{\mathbf{Simp}}
\renewcommand\p@enumii{}
\begin{document}

\title[Computing the homology functor on semi-algebraic maps]
	{
	Computing the homology functor on semi-algebraic maps and diagrams	
	}
	\author{Saugata Basu}
	\address{Department of Mathematics,
		Purdue University, West Lafayette, IN 47906, U.S.A.}
	\email{sbasu@math.purdue.edu}
	
	\author{Negin Karisani}
	\address{Department of Computer Science, 
		Purdue University, West Lafayette, IN 47906, U.S.A.}
	\email{nkarisan@cs.purdue.edu}

	\subjclass{Primary 14F25, 55N31; Secondary 68W30}
	\date{\textbf{\today}}
	\keywords{semi-algebraic sets, simplicial complex, persistent homology, barcodes}
	\thanks{
		Basu was  partially supported by NSF grants
		CCF-1910441.}

\begin{abstract}
    Developing an algorithm for computing the Betti numbers of semi-algebraic sets with singly exponential complexity has been a holy grail in algorithmic semi-algebraic geometry and only partial results are known. In this paper we consider the more general problem of computing the image under the homology functor of a semi-algebraic map $f:X \rightarrow Y$ between closed and bounded semi-algebraic sets. For every fixed $\ell \geq 0$ we give an algorithm with singly exponential complexity that computes bases of the homology groups $\HH_i(X), \HH_i(Y)$ (with rational coefficients)  and a matrix with respect to these bases of the induced linear maps
    $\HH_i(f):\HH_i(X) \rightarrow \HH_i(Y), 0 \leq i \leq \ell$. We generalize this algorithm to more general (zigzag) diagrams of maps between closed and bounded semi-algebraic sets and give a singly exponential algorithm for computing 
    the homology functors on such diagrams.
    This allows us to give an algorithm with singly exponential
    complexity for computing barcodes of semi-algebraic zigzag persistent homology 
    in small dimensions.
\end{abstract}

	\maketitle
	
	\tableofcontents

\section{Introduction}
\label{sec:intro}
Let $\R$ be a real closed field and $\D$ an ordered domain contained in $\R$.

The problem of effective computation of topological properties of semi-algebraic subsets of $\R^k$ has a long
history.  Semi-algebraic subsets of $\R^k$ are subsets defined by first-order formulas in the language of ordered
fields (with parameters in $\R$). Since the first-order theory of real closed fields admits 
quantifier-elimination, we can assume that each semi-algebraic subset $S \subset \R^k$ is defined by some 
\emph{quantifier-free} formula $\phi$. A quantifier-free formula $\phi(X_1,\ldots,X_k)$ in the 
language of ordered fields with parameters in $\D$, is a formula with atoms of the form $P = 0, P > 0, P < 0$,
$P \in \D[X_1,\ldots,X_k]$.

Semi-algebraic subsets of $\R^k$ have tame topology. In particular, closed and bounded semi-algebraic subsets
of $\R^k$ are semi-algebraically triangulable (see for example \cite[Chapter 5]{BPRbook2}). 
This means that there exists a finite simplicial complex $K$, whose 
\emph{geometric realization}, $|K|$,  considered as a subset of $\R^N$ for some $N >0$, is semi-algebraically homeomorphic to $S$. 
The semi-algebraic homeomorphism
$|K| \rightarrow S$ is called a \emph{semi-algebraic triangulation of $S$}. All topological
properties of $S$ are then encoded in the finite data of the simplicial complex $K$. 

There exists a classical algorithm
which takes as input a quantifier-free formula defining a semi-algebraic set $S$, and 
produces as output a semi-algebraic triangulation of $S$  (see for instance \cite[Chapter 5]{BPRbook2}).  
However,  this algorithm is based on the technique of \emph{cylindrical algebraic decomposition}, and hence 
the complexity of this algorithm is prohibitively expensive,  being doubly exponential in $k$. 
More precisely, given a description by a quantifier-free formula involving 
$s$ polynomials of degree at most $d$, of a closed and bounded semi-algebraic subset of $S \subset \R^k$, there
exists an algorithm computing a semi-algebraic triangulation of $h: |K| \rightarrow S$, whose complexity
is bounded by $(s d)^{2^{O(k)}}$. Moreover, the size of the simplicial complex $K$ (measured by the number of 
simplices) is also bounded by $(s d)^{2^{O(k)}}$.

\subsection{Doubly vs singly exponential} 
One can ask whether the doubly exponential behavior for the semi-algebraic triangulation problem is intrinsic
to the problem. One reason to think that it is not so comes from the fact that the ranks of the homology groups of 
$S$ (following the same notation as in the previous paragraph), 
and so in particular those of the simplicial complex $K$, is bounded by
$(O(sd))^k$ (see for instance \cite[Chapter 7]{BPRbook2}), 
which is singly exponential in $k$. So it is natural to ask if this singly exponential upper bound
on $\rank(\HH_*(S))$ is ``witnessed''  by an efficient semi-algebraic triangulation of small (i.e. singly exponential) size.
This is not known.

In fact, designing an algorithm with a \emph{singly exponential} complexity 
for computing a semi-algebraic triangulation of a given semi-algebraic set 
has remained a holy grail in the field of algorithmic real algebraic geometry and little progress has been made over the last 
thirty years on this problem (at least for general semi-algebraic sets). 
We note here that designing algorithms with singly exponential complexity has being a \emph{leit motif}  in the research in algorithmic semi-algebraic geometry over the past decades -- starting from the so called ``critical-point method'' which resulted in algorithms for testing emptiness, connectivity, computing the Euler-Poincar\'e characteristic, as well as for the first few Betti numbers of semi-algebraic sets (see \cite{Basu-survey} for a history of these developments and contributions of many authors). More recently, such algorithms have also been 
developed in other (more numerical) models of computations \cite{BCL2019,BCT2020.1, BCT2020.2}.

In \cite{BCL2019, BCT2020.1, BCT2020.2}, the authors take a different approach. Working over $\mathbb{R}$, and given a 
``well-conditioned''  semi-algebraic subset $S\subset \mathbb{R}^k$,  
they compute a witness complex whose geometric realization is $k$-equivalent to $S$. The size of this witness
complex as well the complexity of the algorithm is bounded singly exponentially in $k$, but also depends on a real parameter, namely the  condition number of the input 
 (and so this bound is not uniform). The algorithm will fail for ill-conditioned input when the condition number becomes
 infinite. This is unlike the kind of algorithms we consider in the current paper, which are supposed to work for all inputs
 and with uniform complexity upper bounds. So these approaches are not comparable.

\subsection{Homology as a functor}
Homology is a functor from the category of topological spaces
to $\mathbb{Z}$-modules. Restricted to the category of semi-algebraic sets and maps and considering homology groups with only rational coefficients, it is a functor from
the category of semi-algebraic subsets of $\R^k, k >0$ to finite dimensional
$\Q$-vector spaces.
The algorithms discussed in the previous section aimed only at
computing the dimension of the homology groups. 
However,
a very natural algorithmic question that arises
is the following.

\begin{problem}
\label{prob:1}
Given a first-order formula $\phi_f$ describing the graph of a semi-algebraic map $f:X \rightarrow Y$, compute with singly exponential complexity a description of the
map $\HH_i(f): \HH_i(X) \rightarrow \HH_i(Y)$ (i.e. compute a basis of 
of $\HH_i(X),\HH_i(Y)$ and the matrix corresponding to these bases of the
linear map $\HH_i(f)$). More generally, given a diagram of semi-algebraic maps, compute with singly exponential complexity bases 
of the homology groups of the various semi-algebraic sets and matrices corresponding to the 
different maps. We will say that such an algorithm \emph{computes
the homology functor for semi-algebraic maps (or more generally diagram of maps) in dimension $i$}.
\end{problem}

\begin{remark}
\label{rem:categorical}
Studying the ``functor complexity'' of the homology functor was raised in 
\cite[Section 7, Problem (4)]{Basu-Isik} in the setting of categorical complexity. In this paper we initiate the study of this functor from the complexity point of view, though the definition of complexity that we use
in this paper is the classical notion and not the categorical one introduced in \cite{Basu-Isik}. 
\end{remark}

\begin{remark}
\label{rem:sa-maps-not-triangulable}
One important point to note that is that semi-algebraic maps 
$f:X \rightarrow Y$ between closed and bounded semi-algebraic sets 
are not necessarily triangulable (unless $\dim Y \leq 1$). 
An easy example is the so called ``blow-down'' map.

\begin{example}
\label{eg:blow-down}
Let $S \subset \R^3$ be defined by the formula
\[
(Y - ZX  = 0) \wedge (X^2 + Y^2 - 1 \leq 0) \wedge (Y - X \leq 0) \wedge (X \geq 0),
\]
$T$ the unit disk in $\R^2$, and $f:S \rightarrow T$
the projection map along the $Z$-coordinate. The map $f$ is easily seen to be not triangulable. More precisely, there are no semi-algebraic triangulations 
\[
h_S:|K_S| \rightarrow S, h_T: |K_T| \rightarrow T,
\]
and a simplicial map $F:K_S \rightarrow K_T$, such
that 
\[
|F| \circ h_S = h_T \circ F.
\]
\end{example}
Thus, one cannot expect to solve Problem~\ref{prob:1} by
computing semi-algebraic triangulations of $h_X: |K_X| \rightarrow X$ and $h_Y:|K_Y| \rightarrow Y$, such that the 
induced map $h_Y^{-1} \circ f \circ h_X$ is simplicial.
\end{remark}

\begin{remark}
It is not at all clear if the algorithms designed so far  for computing
Betti numbers of semi-algebraic sets with singly exponential complexity
(both exact algorithms such as those in 
\cite{Bas05-first,BPRbettione} or numerical ones such as those in
\cite{BCL2019, BCT2020.1, BCT2020.2})
can extend to solve Problem~\ref{prob:1}.
\end{remark}

\subsection{Main contributions}
The main contribution of this paper is a partial solution to Problem~\ref{prob:1}. 
We prove the following theorem which we state informally here
(see Theorem~\ref{thm:functor} in Section~\ref{sec:main-results} for a precise statement).

\begin{theorem*}[Computing homology functor on semi-algebraic maps]
For each fixed $\ell \geq 0$,
there exists an algorithm with singly exponential complexity
that computes the homology functor for semi-algebraic maps  between closed and bounded semi-algebraic sets in each dimension $i, 0 \leq i \leq \ell$.
\end{theorem*}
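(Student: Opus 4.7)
The plan is to reduce the computation of $\HH_i(f): \HH_i(X) \to \HH_i(Y)$ to the computation of maps on homology induced by \emph{inclusions} of closed and bounded semi-algebraic sets. For a semi-algebraic map $f:X\to Y$, I would realize the mapping cylinder
\[
\cyl(f) \;=\; \bigl((X\times[0,1])\sqcup Y\bigr)\big/\bigl((x,1)\sim f(x)\bigr)
\]
as an explicit closed and bounded semi-algebraic set of singly exponential description complexity—for instance, as the union of the semi-algebraic set $\{\bigl((1-t)x,\,t,\,tf(x)\bigr) : x\in X,\ t\in[0,1]\}$ with the copy $\{0\}\times\{1\}\times Y$ inside $\R^{k_1}\times[0,1]\times\R^{k_2}$. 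Along with the semi-algebraic inclusions $j_X:X\hookrightarrow \cyl(f)$ and $j_Y:Y\hookrightarrow \cyl(f)$, the latter a deformation retract, one has $\HH_i(f)=\HH_i(j_Y)^{-1}\circ \HH_i(j_X)$; so it suffices to compute bases of $\HH_i$ of $X$, $Y$, and $\cyl(f)$ together with the matrices of $\HH_i(j_X)$ and $\HH_i(j_Y)$.

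For the induced map on homology of an inclusion $A\hookrightarrow B$ of closed and bounded semi-algebraic sets, in the range $i\le \ell$, I would adapt the Mayer-Vietoris / \v{C}ech spectral sequence strategy that already underlies the singly-exponential Betti number algorithms of \cite{Bas05-first,BPRbettione}. The goal is to construct a finite semi-algebraic cover $\{U_\alpha\}$ of $B$, of singly exponential cardinality, such that (i) every non-empty intersection of at most $\ell+2$ of the $U_\alpha$'s is semi-algebraically contractible, and (ii) the restricted family $\{U_\alpha\cap A\}$ covers $A$ with the same contractibility property on $\le \ell+2$-fold intersections. Given such compatible covers, one obtains two truncated \v{C}ech–Mayer-Vietoris double complexes of $\Q$-vector spaces connected by a chain map induced by inclusion; both double complexes have singly exponential total dimension (since only the first $\ell+1$ rows contribute), so their $E_\infty$-pages and the induced map between them—and hence bases of $\HH_i(A)$, $\HH_i(B)$ together with the matrix of the inclusion-induced map—can be extracted by linear algebra over $\Q$.

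The main obstacle is producing a cover of $B$ whose restriction to $A$ remains a cover with contractible intersections, all of singly exponential total size. I expect to handle this by building the cover of $B$ from a polynomial family that contains a defining family of $A$, so that each piece $U_\alpha$ meets $A$ either in the empty set or in the analogously constructed contractible cell for $A$; the classical trick of perturbing by an infinitesimal over an extension field $\R\la\eps\ra$ of $\R$ then yields sufficiently generic Morse behaviour simultaneously for $A$ and $B$, in the spirit of the constructions in \cite{BPRbook2}. Finally, to obtain the statement for zigzag diagrams, I would iterate the mapping cylinder construction to form a single closed and bounded semi-algebraic ``total space'' (a homotopy colimit of the diagram) into which every node includes semi-algebraically; running the simultaneous cover-based chain-level computation for all nodes at once then produces all the required matrices of the homology functor on the diagram within the same singly-exponential time bound.
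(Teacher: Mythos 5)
Your first move is the right one, and it is the same move the paper makes: reduce a general semi-algebraic map $f\colon X\to Y$ to an inclusion by realizing the mapping cylinder as an explicit closed and bounded semi-algebraic set (the paper's $\tcyl(f)$ is defined slightly differently so that the retraction to $Y$ is literally a coordinate projection, but your variant $\{((1-t)x,t,tf(x))\}\cup\{0\}\times\{1\}\times Y$ works for the same reason, via the Vietoris--Begle theorem). The zigzag extension by stacking mapping cylinders into a single total space is also the idea pursued in the paper.

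Where your proposal has a genuine gap is in what happens after you have this cylinder. First, the cylinder is given by an existentially quantified formula (you need to project out the $x$-variable), and after applying a singly-exponential quantifier elimination algorithm one obtains a quantifier-free description that has no reason to be a \emph{closed} formula (disjunction of conjunctions of weak inequalities), even though the set it describes is closed and bounded. All known singly-exponential constructions of good covers and of simplicial replacements take closed formulas as input; converting an arbitrary quantifier-free description of a closed set into a closed description with singly-exponential complexity is itself a well-known open problem. The paper handles this by an explicit Gabrielov--Vorobjov-style infinitesimal thickening (Notation~\ref{not:phi-star} and Proposition~\ref{prop:closed}), producing a closed formula over $\R\la\bar\eta\ra$ whose realization deformation-retracts onto the extension of the original set; your proposal never addresses this and would stall at exactly this point. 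Second, the step you call the ``main obstacle'' --- producing covers of $B$ and of $A=B\cap(\text{slice})$ simultaneously, with contractible intersections of bounded depth on both, in singly exponential size --- is not a routine adaptation of the Betti-number covers of \cite{Bas05-first,BPRbettione}; it is essentially the content of the simplicial-replacement theorem of \cite{basu-karisani}, which the paper invokes as a black box. Your one-sentence sketch (``build the cover of $B$ from a polynomial family containing a defining family of $A$, then perturb by an infinitesimal for generic Morse behaviour'') does not explain why every $\le(\ell+2)$-fold intersection restricted to $A$ remains contractible, and this compatibility is precisely what is hard. As written, the proposal reduces the theorem to two unproved claims of comparable difficulty; the paper's proof is complete because it supplies the closed-formula replacement and then cites \cite{basu-karisani} for the compatible simplicial model.
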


\begin{remark}
\label{rem:quivers}
Note that up to isomorphism (in the category of vector spaces)  a linear map $L: V \rightarrow W$ between finite dimensional vector spaces $V,W$ is determined by the numbers $\dim V, \dim W, \mathrm{rank}(L)$. Thus, for a 
semi-algebraic map $f:X \rightarrow Y$, computing a description
up to isomorphism of the linear map $\HH_i(f):\HH_i(X) \rightarrow \HH_i(Y)$, amounts to computing $\dim \HH_i(X),\dim \HH_i(Y), \rank(\HH_i(f))$. However, the isomorphism class of more general
diagrams of vector spaces (such as zigzag diagrams in Theorem~\ref{thm:zigzag}) is not determined just by the dimensions of the vector spaces and the ranks of the linear maps. 
\end{remark}

\subsubsection{More general diagrams}
Once we have an algorithm for computing the homology functor on semi-algebraic maps it is natural to try to extend it to more complicated diagrams of maps. As an example,
	in this paper we consider \emph{zigzag diagrams} (see Notation~\ref{not:zigzag} below) 
	of semi-algebraic maps.
	We prove the following theorem (see Theorem~\ref{thm:zigzag} for a more precise statement).
	
	\begin{theorem*}[Computing homology functor on  zigzag diagrams]
	For each fixed $\ell \geq 0$,
there exists an algorithm with singly exponential complexity
that computes the homology functor for diagrams of semi-algebraic maps of the zigzag type 
between closed and bounded semi-algebraic sets in each dimension $i, 0 \leq i \leq \ell$.
    \end{theorem*}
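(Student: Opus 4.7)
The plan is to reduce the zigzag case to repeated invocation of the single-map theorem, stitching the outputs together by an explicit change-of-basis computation along each shared node. Throughout, $\ell$ is fixed, and we only need $H_i$ for $0 \le i \le \ell$.

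First I would parse the input zigzag $X_0 \leftrightarrow X_1 \leftrightarrow \cdots \leftrightarrow X_n$ as a list of $n$ semi-algebraic maps $f_k$, each given by a quantifier-free formula describing its graph inside the product of two closed and bounded semi-algebraic sets. Since $n$ is part of the input size, a singly exponential per-arrow cost, multiplied by $n$, remains singly exponential overall. For each individual arrow $f_k\colon X_a \to X_b$ I would apply the preceding theorem (computing the homology functor on a single semi-algebraic map) to produce bases $\mathcal{B}_k^{(a)}$ of $H_i(X_a)$ and $\mathcal{B}_k^{(b)}$ of $H_i(X_b)$ together with the matrix $M_k$ of $H_i(f_k)$ in these bases, in singly exponential time.

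The key complication is that each intermediate space $X_j$ is touched by two arrows $f_k$ and $f_{k+1}$, and the two invocations of the single-map algorithm on these arrows may return different bases $\mathcal{B}_k^{(j)}, \mathcal{B}_{k+1}^{(j)}$ of the same space $H_i(X_j)$. To reconcile them I would compute, for each interior $X_j$, a change-of-basis matrix $C_j$ by running the single-map algorithm a third time on the identity $\mathrm{id}\colon X_j \to X_j$, arranged so that its source is described by the same data used to produce $\mathcal{B}_k^{(j)}$ and its target by the data used for $\mathcal{B}_{k+1}^{(j)}$; the resulting matrix $H_i(\mathrm{id})$ is by construction the change of basis. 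Fixing, say, $\mathcal{B}_0^{(j)} = \mathcal{B}_1^{(j)}$ as the canonical basis of $H_i(X_j)$ for every $j$, I would conjugate each $M_k$ by the appropriate $C$'s to express all arrows in these canonical bases. The output is then the list of bases $\{\mathcal{B}_j\}_j$ and the list of matrices $\{M_k\}_k$, with singly exponential bit-size bounds inherited from the single-map algorithm and from the bound $\dim H_i(X_j) \le (sd)^{O(k)}$ on ranks of homology of semi-algebraic sets.

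The main obstacle is precisely the change-of-basis step: the single-map algorithm's internal construction (for instance, a simplicial approximation built from the graph of $f_k$, perhaps via a mapping cylinder) may genuinely produce different simplicial models of $X_j$ on different invocations, so one cannot simply assume $\mathcal{B}_k^{(j)} = \mathcal{B}_{k+1}^{(j)}$. The workaround sketched above---applying the single-map algorithm to $\mathrm{id}_{X_j}$ with two distinct input descriptions---requires that the algorithm from the preceding theorem is modular enough to be fed with two different simplicial representations of the same semi-algebraic set, and that the resulting matrix of $H_i(\mathrm{id})$ is invertible in singly exponential time. Once this modularity is verified (either built into the single-map algorithm, or obtained by plugging in a singly exponential semi-algebraic homotopy between the two models and applying the single-map algorithm to it), every remaining computation is standard rational linear algebra on matrices of singly exponential size, so the whole zigzag procedure stays within singly exponential complexity as required.
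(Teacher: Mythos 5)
Your approach is genuinely different from the paper's, and it has a real gap precisely at the step you flag as needing verification. The paper does not run the single-map algorithm once per arrow and then reconcile bases. Instead it constructs a \emph{single} global semi-algebraic object $\tcyl(D)$ from the whole zigzag (Definition~\ref{def:cylinder-general}): a union of suitably oriented and translated semi-algebraic mapping cylinders $\underset{\rightarrow}{\tcyl}(f_i)$, $\underset{\leftarrow}{\tcyl}(f_i)$ living in a common ambient space $\R^k\times\R^k\times[0,n]$, inside which \emph{every} arrow of $D$ simultaneously becomes an inclusion $\widetilde S_{i\pm 1}\hookrightarrow \widetilde S_i$. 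After passing to closed formulas (Notation~\ref{not:phi-star}), it then applies the simplicial replacement algorithm of \cite{basu-karisani} \emph{once} to the full tuple $(\widetilde S_0,\ldots,\widetilde S_n)$; by construction that algorithm outputs a single simplicial complex together with compatible subcomplexes, so consistency of bases across shared nodes is automatic. Proposition~\ref{prop:zigzag} (a Vietoris--Begle argument applied to the retractions $g_j:\widetilde S_j\to S_j$) supplies the homological equivalence of $\tcyl(D)$ with $D$.

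The gap in your proposal is that the single-map algorithm (Theorem~\ref{thm:functor}) cannot be used to produce the change-of-basis matrix $C_j$ you need. That algorithm takes quantifier-free formulas as input and returns an abstract pair of simplicial complexes $\Delta_S\subset\Delta_T$ that is homologically $\ell$-equivalent (a \emph{zigzag} of equivalences, Definition~\ref{def:ell-equivalent}) to $f:S\to T$; it does not output an explicit isomorphism $H_i(\Delta_T)\cong H_i(T)$ realized by cycles in $T$, and it does not accept ``the internal simplicial model from a previous run'' as input. Both your invocations on $f_k$ and $f_{k+1}$ take the \emph{same} formula $\phi_j$ for $X_j$, so there is nothing to ``arrange''---the differences between $\mathcal{B}_k^{(j)}$ and $\mathcal{B}_{k+1}^{(j)}$ arise from the algorithm's internal mapping cylinder construction, which depends on the other endpoint. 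Running the algorithm on $\mathrm{id}_{X_j}$ produces a \emph{third}, equally unanchored pair of bases, not a bridge between the first two. And as Remark~\ref{rem:quivers} emphasizes, the isomorphism class of a zigzag of vector spaces is not determined by dimensions and ranks alone, so this reconciliation step is precisely where the mathematical content lies; deferring it to an unverified ``modularity'' assumption leaves the proof incomplete. If you want to salvage a per-arrow approach, you would need a version of the single-map algorithm that outputs explicit representing cycles in $S$ and $T$, or you should adopt the paper's strategy of globalizing the mapping cylinder so that the simplicial replacement is computed once for the whole diagram.
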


\subsubsection{Computing semi-algebraic zigzag persistence}
As an application of the previous theorem we consider the problem
of computing zigzag persistence modules for semi-algebraic maps. 
Persistent homology theory is foundational in the area of topological data analysis mainly in the context of finite simplicial complexes (see for example \cite{Edelsbrunner-Harer,Dey-Wang2021} for background and many applications of persistent homology theory).  

Persistence homology is defined for any filtration of topological spaces 
and its underlying module structure gives rise to barcodes via decomposition into irreducibles.  
The algorithmic study of persistent homology for filtrations of semi-algebraic sets by the sublevel sets of a polynomial function was initiated in \cite{basu-karisani-persistent} and we refer the reader to that paper for the basic definitions including that of barcodes.
Although persistent homology was originally defined for filtrations
of topological spaces, it
has since been generalized to arbitrary diagrams 
$D:P \rightarrow \mathbf{Top}$ (here $D$ is a functor from a poset category $P$  to the category of topological spaces) \cite{Bubenik-et-al}. 
	One particular class of diagrams that has been studied in the literature are zigzag diagrams.
	Zigzag persistence modules was introduced in \cite{CarlssonSilva2010} and was studied from the algebraic as well as
	algorithmic point of view. In particular, they showed that it is possible to
	associate barcodes to zigzag persistent homology as well and gave an algorithm
	to compute them. Zigzag persistence is a very active area of current research \cite{Botnan-et-al, Carlsson-et-al}.
	
	The previous theorem allows to reduce the computation of the barcode of a zigzag diagram
	of semi-algebraic maps to that of finite dimensional vector spaces, and
	here we can use the algorithm in \cite[Section 4.2]{CarlssonSilva2010}.
	In this way we obtain for each fixed $\ell$, a singly exponential algorithm for computing zigzag persistent module for semi-algebraic zigzag diagrams in dimensions $0$ to $\ell$.
We have the following theorem.

\begin{theorem}[Computing semi-algebraic zigzag persistent modules]
\label{thm:zigzag-persistence}
For each fixed $\ell \geq 0$,
there exists an algorithm with singly exponential complexity
that computes the barcode of diagrams of semi-algebraic maps of the zigzag type 
between closed and bounded semi-algebraic sets in each dimension $i, 0 \leq i \leq \ell$.
\end{theorem}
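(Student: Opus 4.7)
The plan is to reduce the computation of barcodes to a purely linear-algebraic problem about zigzag modules of finite dimensional $\Q$-vector spaces, and then to invoke the existing combinatorial algorithm of Carlsson--Silva on the resulting data. The whole argument is a composition of two steps, each of which has a singly exponential cost in terms of the original semi-algebraic input.

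First, given as input a zigzag diagram of semi-algebraic maps between closed and bounded semi-algebraic sets, I would apply the preceding theorem on computing the homology functor on zigzag diagrams (Theorem~\ref{thm:zigzag}) for each dimension $i$ with $0 \leq i \leq \ell$. This produces, in singly exponential time, explicit bases of each homology group $\HH_i(X_j)$ appearing as a vertex of the zigzag, together with rational matrices representing each of the induced linear maps $\HH_i(f_j)$. The total bit-size of this output is bounded singly exponentially in the input parameters, and the result is precisely a finite zigzag module (a representation of a type-$A$ quiver) over $\Q$, presented in the standard matrix form.

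Second, I would feed this finite zigzag module into the decomposition algorithm of \cite[Section 4.2]{CarlssonSilva2010}, which decomposes any zigzag module over a field into a direct sum of interval modules and returns the corresponding barcode. The complexity of that algorithm is polynomial in the total dimension of the module and in the number of maps in the zigzag. Since both of these quantities are bounded singly exponentially in the semi-algebraic input, the overall complexity is still singly exponential, and the barcode is computed for each $i$ with $0 \leq i \leq \ell$.

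The main point to check, rather than a true obstacle, is well-definedness: the barcode is an invariant of the isomorphism class of the zigzag module of vector spaces, and we must make sure that the particular choices of bases made by the homology-functor algorithm do not affect the final output. This is guaranteed by Gabriel's theorem for representations of type-$A$ quivers, which underlies the Carlsson--Silva decomposition and is the reason (mentioned in Remark~\ref{rem:quivers}) that zigzag modules admit a complete discrete invariant despite not being determined by dimensions and ranks alone. Putting these pieces together yields the stated singly exponential algorithm.
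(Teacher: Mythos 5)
Your proposal is correct and matches the paper's own proof: both reduce to the finite zigzag module of $\Q$-vector spaces produced by the zigzag homology-functor theorem (the precise version being Theorem~\ref{thm:zigzag2}, which outputs bases and matrices, rather than Theorem~\ref{thm:zigzag}, which outputs simplicial complexes) and then invoke the Carlsson--Silva decomposition algorithm, noting the complexity stays singly exponential. The added remark on well-definedness via Gabriel's theorem is a correct and harmless elaboration of what the paper leaves implicit.
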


\subsection{Key ideas}
\label{subsec:key-ideas}
We summarize here the key ideas that go into the proofs of the theorems stated above.
\subsubsection{Replacing semi-algebraic maps by inclusions which are homologically equivalent}
Semi-algebraic maps between closed and bounded semi-algebraic sets which are inclusions can be treated much more easily than in the general case. Indeed,  the main result proved in \cite{basu-karisani} implies that for each fixed $\ell \geq 0$, there exists an algorithm that given a semi-algebraic inclusion map
$X \hookrightarrow Y$ between closed and bounded semi-algebraic sets
$X,Y$ as input, computes two finite simplicial complexes, $K$ and $L$
with $K \subset L$, such that the inclusion of the corresponding geometric realizations $|K| \hookrightarrow |L|$ is $\ell$-equivalent to the inclusion $X \hookrightarrow Y$ (see Definition~\ref{def:ell-equivalent}). The key new idea in this paper is to replace an
arbitrary semi-algebraic map between closed and bounded semi-algebraic sets, by an inclusion map which is equivalent to the given map up to homotopy. 

\subsubsection{Realizing the mapping cylinder of a semi-algebraic map up to homotopy as a semi-algebraic set}
The standard tool for achieving this involves taking the mapping cylinder, $\cyl(f)$, of the map $f$. The canonical inclusion $X \hookrightarrow \cyl(f)$
is then homotopy equivalent to the $f$. 
However, the definition of mapping cylinder (see Definition~\ref{def:cylinder-top} below)
involves identification of certain points of a disjoint union (or equivalently passing to a quotient space). While quotients
of semi-algebraic sets by proper equivalence relations are semi-algebraic (see for example \cite{Dries}),
no singly exponential algorithm for computing a semi-algebraic description of such a quotient is known. 

We overcome this difficulty by modifying the construction of the mapping cylinder (see Section~\ref{subsec:inclusion}).
We associated to the map $f$, a modified mapping cylinder of $f$,
which we denote $\tcyl(f)$ (see Definition~\ref{def:cylinder}) which is a semi-algebraic set having similar properties as mapping cylinder of $f$ (see Proposition~\ref{prop:tcyl}). 

The definition of $\tcyl(f)$ does not involve taking quotients.  However, it does involve quantifier elimination of an existential block of quantifiers (or equivalently taking the image under a linear projection map).
This leads to a further technical complication.
It is important for us in order to be able to apply the 
result of \cite{basu-karisani} that the semi-algebraic sets that we deal are not only closed, but
are described by closed formulas (see Notation~\ref{not:sign-condition}). 
While the image under projection of a closed and bounded
semi-algebraic set is closed and bounded, the quantifier elimination algorithm that we use to obtain its description by a quantifier-free formula is not guaranteed to produce a closed description. Indeed no algorithm with singly exponential complexity is known for obtaining a closed description of a
given closed semi-algebraic set and designing such an algorithm is considered to be
a difficult open problem in algorithmic semi-algebraic geometry.
Thus, we need an additional step. 

\subsubsection{Replacing closed semi-algebraic set by ones described by closed formulas}
We replace
(see Section~\ref{subsec:making-closed})
a closed semi-algebraic set by another
one, which is infinitesimally larger but has the same homotopy type, and  
moreover is described by a closed formula having size bounded linearly in the size of the original formula (see Notation~\ref{not:phi-star} and Proposition~\ref{prop:closed} below). For this purpose, as usual in algorithmic semi-algebraic geometry we utilize extensions (obtained by adjoining infinisteimal elements)
of the given real closed fields by fields of Puiseux series in these 
infinitesimals (see Section~\ref{subsec:Puiseux}). 

\subsubsection{Mapping cylinder for diagrams}
Finally, to extend our algorithm to zigzag diagrams we need
to further generalize the definition of $\tcyl(f)$, so that every map
in the diagram \emph{simultaneously} becomes inclusions without changing the homological type of the diagram (see Section~\ref{sec:zigzag}).  
We generalize the definition of $\tcyl(f)$ to define $\tcyl(D)$, the semi-algebraic mapping cylinder of a zigzag diagram $D$
(Definition~\ref{def:cylinder-general}).
We prove that the $\tcyl(D)$ and $D$ are homologically equivalent (Proposition~\ref{prop:zigzag}).
Using similar techniques as in the case of maps (i.e. replacing by a set defined by closed formulas etc.)
we are then able to extend the algorithm for maps to the case of zigzag diagrams, and ultimately give an algorithm to compute barcodes of semi-algebraic zigzag diagrams. \\

The rest of the paper is organized as follows. In Section~\ref{sec:prelim} we fix notation
and give precise definitions of complexity and topological equivalences. We also give the necessary background in real algebraic geometry to make the rest of the paper self-contained.
In Section~\ref{sec:main-results} we give precise statements of the theorems proved in this paper. The subsequent sections are devoted to the proofs of these theorems.
In Section~\ref{sec:prelim-math} we state and prove some mathematical results that
play an important role in the algorithms described in this paper.
In Section~\ref{subsec:inclusion}, we give the construction of the
semi-algebraic mapping cylinder (i.e. of the semi-algebraic set $\tcyl(f)$ referred to in the previous paragraph) and prove its main properties.
In Section~\ref{subsec:making-closed} we give the procedure for replacing a given closed semi-algebraic set by one having the same homotopy type and which is described by a closed formula.
In Section~\ref{sec:proof} we complete the 
proof of Theorem~\ref{thm:functor}. Finally, in Section~\ref{sec:zigzag} we apply the ideas 
developed in the proof of Theorem~\ref{thm:functor} to develop an algorithm for computing 
semi-algebraic zigzag persistent barcodes. In Section~\ref{sec:conclusion} we state some open problems.

\section{Preliminaries}
\label{sec:prelim}
\subsection{Homological equivalence of semi-algebraic maps}
We begin with the precise definitions of the two kinds of 
topological equivalence that we are going to use in this paper.

\subsubsection{Homological equivalences}
\begin{definition}[Homological $\ell$-equivalences]
\label{def:equivalence-spaces}
We say that a semi-algebraic map $f:X \rightarrow Y$ between two 
semi-algebraic sets $X,Y$ is a homological $\ell$-equivalence,
if the induced homomorphisms between the homology groups 
$\HH_i(f): \HH_i(X) \rightarrow \HH_i(Y)$ are isomorphisms for $0 \leq i \leq \ell$.

Given two semi-algebraic maps $f:X \rightarrow Y,f': X' \rightarrow Y'$,
a homological $\ell$-equivalence between $f$ and $f'$ is a pair of semi-algebraic maps $F_X, F_Y$ such that
$f' \circ F_X = F_Y \circ f$, and $F_X, F_Y$ are homological $\ell$-equivalences. 
\end{definition}

The relation of homological $\ell$-equivalence as defined above is not an equivalence relation since it is not 
symmetric. In order to make it symmetric one needs to ``formally invert'' homologically $\ell$-equivalences.

\begin{definition}[Homologically $\ell$-equivalent]
\label{def:ell-equivalent}
We will say that  \emph{$X$ is homologically $\ell$-equivalent to $Y$}  (denoted $X \sim_\ell Y$), if and only if there exists 
spaces, $X=X_0,X_1,\ldots,X_n=Y$ and homological $\ell$-equivalences  $f_1,\ldots,f_{n}$ as shown below:
\[
\xymatrix{
&X_1 \ar[ld]_{f_1}\ar[rd]^{f_2} &&X_3\ar[ld]_{f_3} \ar[rd]^{f_4}& \cdots&\cdots&X_{n-1}\ar[ld]_{f_{n-1}}\ar[rd]^{f_{n}} & \\
X_0 &&X_2  && \cdots&\cdots &&  X_n&
}.
\]

Similarly, we say that a semi-algebraic map $f:X \rightarrow X'$ is homologically $\ell$-equivalent to the semi-algebraic map $g:Y \rightarrow Y'$, if there exists maps $f = f_0, f_1,\ldots, f_n = g$, and homological
$\ell$-equivalences, $F_1,\ldots,F_n$, as below:
\[
\xymatrix{
&f_1 \ar[ld]_{F_1}\ar[rd]^{F_2} &&f_3\ar[ld]_{F_3} \ar[rd]^{F_4}& \cdots&\cdots&f_{n-1}\ar[ld]_{F_{n-1}}\ar[rd]^{F_{n}} & \\
f_0 &&f_2  && \cdots&\cdots &&  f_n&
}.
\]

It is clear that $\sim_\ell$ is an equivalence relation.
\end{definition}

 \begin{remark}
 \label{rem:homology-vs-homotopy}
 One main tool that we use is the Vietoris-Begle theorem. Since, there are many versions of the Vietoris-Begle theorem in the literature we make precise what we use below.
 If
 $X \subset \mathbb{R}^m, Y \subset \mathbb{R}^n$ are compact semi-algebraic subsets (and so are locally contractible), and 
 $f:X \rightarrow Y$ is a semi-algebraic continuous map such that 
  $f^{-1}(y)$ is homologically $\ell$-connected for each $y \in Y$, 
  then we can conclude that
 $f$ is a homological $\ell$-equivalence (see for example, the statement
 of the  Vietoris-Begle theorem in \cite{Ferry2018}). This latter
 theorem is also valid 
 for semi-algebraic maps between closed and bounded semi-algebraic sets
 over arbitrary real closed fields, once we know it for maps between compact semi-algebraic subsets over $\mathbb{R}$. This follows from a standard argument using the  Tarski-Seidenberg transfer principle and the fact that homology groups of closed bounded semi-algebraic sets can be defined in terms of finite triangulations. We will refer to this version of the Vietoris-Begle theorem  as
 the \emph{homological version of the Vietoris-Begle theorem}.
 \end{remark}

\subsection{Definition of complexity of algorithms}
We will use the following notion of ``complexity of an algorithm''  in this paper. We follow the same definition as used in the book \cite{BPRbook2}. 
  
\begin{definition}[Complexity of algorithms]
\label{def:complexity}
In our algorithms we will take as input quantifier-free first order formulas whose terms
are  polynomials with coefficients belonging to an ordered domain $\D$ contained in a real closed field $\R$.
By \emph{complexity of an algorithm}  we will mean the number of arithmetic operations and comparisons in the domain $\D$.
If $\D = \mathbb{R}$, then
the complexity of our algorithm will agree with the  Blum-Shub-Smale notion of real number complexity \cite{BSS}.
In case, $\D = \Z$, then we are able to deduce the bit-complexity of our algorithms in terms of the bit-sizes of the coefficients
of the input polynomials, and this will agree with the classical (Turing) notion of complexity.
\end{definition}

\subsection{Real algebraic preliminaries}
\begin{notation}[Realizations, $\mathcal{P}$-, $\mathcal{P}$-closed
semi-algebraic sets]
  \label{not:sign-condition} 
  For any finite set of polynomials $\mathcal{P}
  \subset \R [ X_{1} , \ldots ,X_{k} ]$, 
  we call any quantifier-free first order formula $\phi$ with atoms, $P =0, P < 0, P>0, P \in \mathcal{P}$, to
  be a \emph{$\mathcal{P}$-formula}. 
  
  Given any semi-algebraic subset $Z \subset \R^k$,
  we call the realization of $\phi$ in $Z$,
  namely the semi-algebraic set
  \begin{eqnarray*}
    \RR(\phi,Z) & := & \{ \mathbf{x} \in Z \mid
    \phi (\mathbf{x})\}
  \end{eqnarray*}
  a \emph{$\mathcal{P}$-semi-algebraic subset of $Z$}.
  
  If $Z = \R^k$, we will denote the realization of $\phi$ in $\R^k$ by
  $\RR(\phi)$.
  
 We say that a quantifier-free formula $\phi$ is \emph{closed}  
  if it is a formula in disjunctive normal form with no negations, and with atoms of the form $P \geq 0, P \leq 0$ (resp. $P > 0, P < 0$),  
where $P \in \R[X_1,\ldots,X_k]$. If the set of polynomials appearing in a closed formula
is contained in a finite set $\mathcal{P}$, we will call such a formula a $\mathcal{P}$-closed 
formula,
 and we call
  the realization, $\RR \left(\phi \right)$, a \emph{$\mathcal{P}$-closed 
  semi-algebraic set}.
  \end{notation}

We now state precisely the main results proved in this paper.
\section{Main Results}
\label{sec:main-results}

\begin{theorem}
\label{thm:functor2}
For each $\ell \geq 0$, there is an algorithm that accepts as input
\begin{enumerate}[(a)]
    \item finite sets of polynomials 
    \[
    \mathcal{P}_S \subset \D[X_1,\ldots,X_k],
    \]
    \[
    \mathcal{P}_T \subset \D[Y_1,\ldots,Y_m],
    \]
    \[
    \mathcal{P}_f \subset \D[X_1,\ldots,X_k,Y_1,\ldots Y_m];
    \]
    \item
    a $\mathcal{P}_S$-closed formula $\phi_S$, 
a $\mathcal{P}_T$-closed formula $\phi_T$,
and 
a $\mathcal{P}_f$-closed formula $\phi_f$,
such that
$\RR(\phi_S), \RR(\phi_T)$ are bounded and 
$\RR(\phi_f,\R^k \times \R^m)$ is the graph of a semi-algebraic map
$f: S = \RR(\phi_S) \rightarrow  \RR(\phi_T) = T$;
\end{enumerate}
and produces as output for each $i, 0 \leq i \leq \ell$:
\begin{enumerate}[(a)]
\item
bases  
of $\HH_i(S),\HH_i(T)$;
\item
the matrix corresponding to these bases of the
linear map $\HH_i(f): \HH_i(X)\rightarrow \HH_i(Y)$.
\end{enumerate}

The complexity of the algorithm is bounded by 
\[
(s d)^{(k+m)^{O(\ell)}},
\]
where
\[
s = \max(\card(\mathcal{P}_S), \card(\mathcal{P}_T), \card(\mathcal{P}_f)),
\]
and
\[
d = \max_{P \in \mathcal{P}_S \cup \mathcal{P}_T \cup \mathcal{P}_f} \deg(P).
\]
\end{theorem}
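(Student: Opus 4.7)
The plan is to reduce the task of computing the induced map $\HH_i(f)$ to that of computing inclusion-induced maps on homology for pairs of closed semi-algebraic sets, which is precisely what the algorithm of \cite{basu-karisani} accomplishes. The factorization $f = r \circ \iota_S$, where $\iota_S : S \hookrightarrow \tcyl(f)$ is the canonical inclusion and $r : \tcyl(f) \to T$ is a deformation retract onto the canonically embedded copy of $T$, shows that on homology $\HH_i(f) = \HH_i(\iota_T)^{-1} \circ \HH_i(\iota_S)$, where $\iota_T : T \hookrightarrow \tcyl(f)$ is the other canonical inclusion. Hence it suffices to compute both of these inclusion-induced maps simultaneously inside a single ambient simplicial model of $\tcyl(f)$ and then invert the (iso)morphism $\HH_i(\iota_T)$ at the level of matrices.

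First I would construct $\tcyl(f)$ via Definition~\ref{def:cylinder} and invoke Proposition~\ref{prop:tcyl} to guarantee the required homotopy factorization. The construction involves eliminating a single existential block of quantifiers, which can be done with singly exponential complexity using the quantifier elimination algorithm of \cite[Chapter 14]{BPRbook2}. However, the resulting description need not be a closed formula, so I would apply the construction $\phi \mapsto \phi^\star$ of Section~\ref{subsec:making-closed} (Notation~\ref{not:phi-star}, Proposition~\ref{prop:closed}), passing to the real closed extension $\R\langle \eps \rangle$ for a suitable infinitesimal $\eps$. This yields a closed formula describing a semi-algebraic set with the same homotopy type as the extension of $\tcyl(f)$ to $\R\langle \eps \rangle$, inside which (the extensions of) $S$ and $T$ still sit as closed subsets through the canonical inclusions already described by the given $\mathcal{P}_S$- and $\mathcal{P}_T$-closed formulas.

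Next I would apply the algorithm from \cite{basu-karisani} to the triple of nested closed semi-algebraic sets $(S, T, \tcyl(f))$ over $\R\langle \eps \rangle$, up to dimension $\ell$. This produces, with singly exponential complexity, a finite simplicial complex $K$ together with subcomplexes $K_S, K_T \subset K$ such that the inclusions $|K_S| \hookrightarrow |K|$ and $|K_T| \hookrightarrow |K|$ are homologically $\ell$-equivalent to $\iota_S$ and $\iota_T$ respectively. From $K_S$, $K_T$ and $K$ one computes bases of $\HH_i(K_S)$, $\HH_i(K_T)$, $\HH_i(K)$ and the matrices of the inclusion-induced maps by standard boundary-matrix linear algebra in time polynomial in the sizes of the complexes, hence singly exponential in $k+m$. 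Since $\HH_i(K_T) \to \HH_i(K)$ is an isomorphism for $i \leq \ell$, I invert this matrix and compose with the matrix of $\HH_i(K_S) \to \HH_i(K)$ to read off the matrix of $\HH_i(f)$ in the bases chosen for $\HH_i(S)$ and $\HH_i(T)$.

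The main obstacle is ensuring that the three reductions --- construction of $\tcyl(f)$, replacement by a closed formula, and invocation of the algorithm of \cite{basu-karisani} --- compose coherently: the infinitesimally thickened, closed-formula version of $\tcyl(f)$ must simultaneously retain $S$ and $T$ as identifiable closed semi-algebraic subsets with the same homotopy-theoretic roles (in particular, $\iota_T$ must remain a homological $\ell$-equivalence after the thickening), so that the ambient simplicial complex $K$ produced by \cite{basu-karisani} resolves both inclusions $\iota_S, \iota_T$ as subcomplex inclusions. Verifying this compatibility --- and in particular controlling the interaction of the Puiseux infinitesimals introduced in Section~\ref{subsec:making-closed} with the Vietoris--Begle style arguments underlying Proposition~\ref{prop:tcyl} --- is the technical heart of the argument.
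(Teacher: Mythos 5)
Your proposal follows essentially the same route as the paper: replace $f$ by the inclusion into the semi-algebraic mapping cylinder $\tcyl(f)$, eliminate the existential quantifier, pass to a closed formula over a Puiseux extension via Notation~\ref{not:phi-star} and Proposition~\ref{prop:closed}, and invoke the simplicial replacement algorithm of \cite{basu-karisani} before finishing with linear algebra. The one inessential deviation is that you track $T$ inside $\tcyl(f)$ via a third formula (the slice $\Theta'\wedge(T=0)$, giving $\iota_T$), compute the matrix of $\HH_i(\iota_T)$, and invert it; the paper skips this by feeding only the pair $(\Theta'\wedge(T=1),\,\Theta')$ to the simplicial replacement algorithm and using the resulting ambient complex $\Delta_T$ itself as the model of $\HH_i(T)$, since $r:\tcyl(f)\to T$ is already a homological equivalence by Proposition~\ref{prop:homotopy-equivalent}, so no matrix inversion is needed. (A small terminological slip: $i(S)$ and $\iota_T(T)$ are not nested in each other, only both contained in $\tcyl(f)$, but the algorithm of \cite{basu-karisani} does not require nesting, so this does not affect correctness.)
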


Theorem~\ref{thm:functor2} will follow (using standard linear
algebra algorithms) from the following theorem.

\begin{theorem}
\label{thm:functor}
For each $\ell \geq 0$, there is an algorithm that accepts as input
\begin{enumerate}[(a)]
    \item finite sets of polynomials 
    \[
    \mathcal{P}_S \subset \D[X_1,\ldots,X_k], 
    \]
    \[
    \mathcal{P}_T \subset \D[Y_1,\ldots,Y_m],
    \]
    \[
    \mathcal{P}_f \subset \D[X_1,\ldots,X_k,Y_1,\ldots Y_m];
    \]
    \item
    a $\mathcal{P}_S$-closed formula $\phi_S$, 
a $\mathcal{P}_T$-closed formula $\phi_T$,
and 
a $\mathcal{P}_f$-closed formula $\phi_f$,
such that
$\RR(\phi_S), \RR(\phi_T)$ are bounded and 
$\RR(\phi_f,\R^k \times \R^m)$ is the graph of a semi-algebraic map
$f: S = \RR(\phi_S) \rightarrow  \RR(\phi_T) = T$;
\end{enumerate}
and produces as output  simplicial complexes $\Delta_S, \Delta_T, \Delta_S \subset \Delta_T$, such that
$|\Delta_S| \hookrightarrow |\Delta_Y|$ is homologically 
$\ell$-equivalent to $f: S \rightarrow T$.

The complexity of the algorithm is bounded by 
\[
(s d)^{(k+m)^{O(\ell)}},
\]
where
\[
s = \max(\card(\mathcal{P}_S), \card(\mathcal{P}_T), \card(\mathcal{P}_f)),
\]
and
\[
d = \max_{P \in \mathcal{P}_S \cup \mathcal{P}_T \cup \mathcal{P}_f} \deg(P).
\]
\end{theorem}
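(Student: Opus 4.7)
The plan is to reduce the general semi-algebraic map $f: S \to T$ to a semi-algebraic inclusion, and then apply the algorithm of \cite{basu-karisani} for inclusions between closed and bounded semi-algebraic sets (described by closed formulas), which already produces a pair of simplicial complexes $\ell$-equivalent to the input inclusion with singly exponential complexity. As Remark~\ref{rem:sa-maps-not-triangulable} emphasises, one cannot hope to find compatible triangulations of $S$ and $T$ making $f$ simplicial, so the substitute for triangulation must come from the mapping cylinder.

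The first step is to build the semi-algebraic mapping cylinder $\tcyl(f)$ of Definition~\ref{def:cylinder} from the input formula $\phi_f$. By Proposition~\ref{prop:tcyl}, the canonical inclusion $j: S \hookrightarrow \tcyl(f)$ together with the canonical deformation retraction $r: \tcyl(f) \to T$ satisfies $r \circ j = f$ and $r$ is a homotopy equivalence, so that the square
\[
\xymatrix{
S \ar[d]_{\Id} \ar@{^{(}->}[r]^-{j} & \tcyl(f) \ar[d]^{r} \\
S \ar[r]^{f} & T
}
\]
realises a homological $\ell$-equivalence of maps in the sense of Definition~\ref{def:equivalence-spaces}. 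The cylinder $\tcyl(f)$ is defined by an existentially quantified formula over $\phi_f$, $\phi_S$ and $\phi_T$; applying the singly exponential quantifier-elimination algorithm of \cite[Chapter 14]{BPRbook2} yields a quantifier-free description of $\tcyl(f)$ of size bounded by $(sd)^{(k+m)^{O(1)}}$.

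The obstacle is that quantifier elimination does not necessarily return a \emph{closed} formula, whereas the algorithm of \cite{basu-karisani} requires its input to be given by closed formulas. The second step, carried out in Section~\ref{subsec:making-closed}, is to pass to an extension $\R\la\eps\ra$ of $\R$ by Puiseux series in an infinitesimal $\eps$ and apply the construction $\phi \mapsto \phi_*$ of Notation~\ref{not:phi-star} to both $\phi_S$ and the description of $\tcyl(f)$; by Proposition~\ref{prop:closed} the resulting $\mathcal{P}$-closed sets $S_*$ and $\tcyl(f)_*$ are infinitesimal thickenings of the original sets having the same homotopy type, and the inclusion $S_* \hookrightarrow \tcyl(f)_*$ is homotopy equivalent (hence homologically $\ell$-equivalent) to $j$. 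Concatenating the two squares of maps shows that $S_* \hookrightarrow \tcyl(f)_*$ is homologically $\ell$-equivalent to $f$ in the sense of Definition~\ref{def:ell-equivalent}, with all intermediate sets closed, bounded, and defined by closed formulas of singly exponential size.

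The third step is to feed the pair $(S_*, \tcyl(f)_*)$, given by their closed formulas, into the algorithm of \cite{basu-karisani} for $\ell$-equivalent simplicial replacement of inclusions. This returns simplicial complexes $\Delta_S \subset \Delta_T$ together with a homological $\ell$-equivalence of inclusions between $|\Delta_S| \hookrightarrow |\Delta_T|$ and $S_* \hookrightarrow \tcyl(f)_*$; splicing this with the zigzag already constructed gives the required homological $\ell$-equivalence between $|\Delta_S| \hookrightarrow |\Delta_T|$ and $f$. For the complexity bound, quantifier elimination and the $\phi \mapsto \phi_*$ construction each cost $(sd)^{(k+m)^{O(1)}}$, while the inclusion algorithm of \cite{basu-karisani} runs in time $(sd)^{(k+m)^{O(\ell)}}$, which absorbs the previous costs and matches the claim. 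The main conceptual obstacle is the closed-formula constraint flagged above: $\tcyl(f)$ is manifestly closed and bounded as a set, but producing a \emph{closed description} of it without recourse to doubly exponential algorithms is precisely what forces the detour through the infinitesimal thickening of Section~\ref{subsec:making-closed}, and also why one must verify carefully that every equivalence in the resulting zigzag goes between sets that satisfy the hypotheses of the inclusion algorithm.
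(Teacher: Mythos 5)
Your plan is essentially the paper's: factor $f$ as the inclusion $i\colon S\hookrightarrow\tcyl(f)$ composed with the homological equivalence $r\colon\tcyl(f)\to T$ (Propositions~\ref{prop:injective}--\ref{prop:tcyl}), quantifier-eliminate the formula for $\tcyl(f)$, pass to closed formulas over a Puiseux extension via Notation~\ref{not:phi-star} and Proposition~\ref{prop:closed}, and feed the resulting nested pair to the simplicial-replacement algorithm of \cite{basu-karisani}. This is exactly Algorithm~\ref{alg:functor}.

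There is, however, one imprecision in your second step that would cause the construction to fail as written. You propose to apply the $\overline{(\cdot)}$ construction to $\phi_S$ and to the quantifier-free description of $\tcyl(f)$, and then take the ``inclusion $S_*\hookrightarrow\tcyl(f)_*$.'' But $\phi_S$ defines $S\subset\R^k$, while $\tcyl(f)$ lives in $\R^k\times\R^m\times\R$, so these two sets are not nested and there is no inclusion to hand to the algorithm of \cite{basu-karisani}. What you actually need is a closed formula defining the \emph{image} $i(S)=\{(x,f(x),1):x\in S\}$ inside the ambient space of $\tcyl(f)$. The paper obtains this by setting $\Theta''_{\phi_S,\psi_T,\phi_f}=\Theta'_{\phi_S,\psi_T,\phi_f}\wedge(T=1)$, i.e.\ by deriving the formula for $i(S)$ from the same quantifier-eliminated formula $\Theta'$ used for $\tcyl(f)$, and then applying $\overline{(\cdot)}$ to the pair $(\overline{\Theta''},\overline{\Theta'})$. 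This matters for a second reason too: one needs the containment $\RR(\overline{\Theta''})\subset\RR(\overline{\Theta'})$ of the thickened sets to hold, which is achieved by applying the $\overline{(\cdot)}$ construction to two formulas built from the same polynomial set (the one coming out of quantifier elimination, augmented by $T-1$); producing the two closed formulas from unrelated starting formulas, as your proposal suggests, does not guarantee the nesting required by the downstream algorithm. (Also note that $\phi_S$ is already a closed formula by hypothesis, so the thickening would be vacuous applied to it; the thickening is needed precisely because the output of quantifier elimination may fail to be closed.) With that correction your argument matches the paper's.
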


We extend Theorem~\ref{thm:functor2} to zigzag diagrams. We prove the following theorem.

\begin{theorem}
\label{thm:zigzag2}
    For each fixed $\ell \geq 0$, there exists an algorithm with the
    following properties. The algorithm takes the following input:
    \begin{enumerate}[1.]
    \item $R >0$;
        \item 
        a tuple of closed formulas
    $\Phi = (\phi_0,\ldots,\phi_n)$, with $S_i = \RR(\phi_i,B) \subset \R^k$ for $0 \leq i \leq n$, where $B = \overline{B_k(0,R)}$;
    \item
    a tuple of 
    closed formulas 
    $
    \Psi = (\psi_1,\ldots,\psi_n),
    $
    such that $\RR(\Psi_i, B \times B)$ is the graph of 
    a semi-algebraically continuous map $f_i:S_i \rightarrow S_{i-1}$
    if $i$ is odd, and is the graph of 
    a semi-algebraically continuous map $f_i:S_{i-1} \rightarrow S_{i}$ if $i$ is even.
    \end{enumerate}
    The algorithm produces as output for each $i,0 \leq i \leq \ell$:
    \begin{enumerate}[(a)]
        \item 
        Bases of the homology groups $\HH_i(S_0), 1 \leq j \leq n$;
        \item
        Matrices of the maps $\HH_i(f_j): \HH_i(S_{j-1}) \rightarrow \HH_i(S_j), 1 \leq j \leq n$.
    \end{enumerate}
     
    The complexity of the algorithm is bounded by $(n s d)^{k^{O(\ell)}}$,
where $s$ is the cardinality of the set of polynomials occurring in all the formulas in the input and $d$ their maximum degree.
\end{theorem}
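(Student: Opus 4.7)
The plan is to reduce Theorem~\ref{thm:zigzag2} to an instance of the homological $\ell$-equivalence problem for a single tower of semi-algebraic inclusions, to which the algorithm of \cite{basu-karisani} (extended as in the proof of Theorem~\ref{thm:functor}) can be applied. The output simplicial data then determines the required bases and matrices by standard linear algebra on chain complexes.

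First I would invoke the generalized semi-algebraic mapping cylinder for zigzag diagrams, namely the construction $\tcyl(D)$ from Definition~\ref{def:cylinder-general}, applied to the diagram $D$ consisting of $S_0,\ldots,S_n$ and the maps $f_1,\ldots,f_n$. By Proposition~\ref{prop:zigzag}, $\tcyl(D)$ is a closed and bounded semi-algebraic set equipped with closed semi-algebraic subsets $\widetilde{S}_j$ and $\widetilde{C}_j$ (the ``subcylinder'' of $f_j$) such that each original map $f_j$ is homologically equivalent, in the sense of Definition~\ref{def:ell-equivalent}, to a pair of inclusions $\widetilde{S}_{j-1}\hookrightarrow\widetilde{C}_j\hookleftarrow\widetilde{S}_j$ (with the arrows oriented according to the parity of $j$). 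Thus computing the homology functor on $D$ reduces to computing it on this augmented chain of inclusions. The semi-algebraic formulas describing $\tcyl(D)$ are obtained from $\Phi$ and $\Psi$ by a single projection (elimination of an existential block), so they remain of singly exponential size in $n,s,d$.

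Next, because the projection step need not preserve the property of being described by a closed formula, I would apply the infinitesimal thickening of Section~\ref{subsec:making-closed} (see Notation~\ref{not:phi-star} and Proposition~\ref{prop:closed}). Working over the real closed extension $\R\la\eps\ra$, each of the closed sets $\widetilde{S}_j,\widetilde{C}_j$ is replaced by an infinitesimally larger set of the same homotopy type, described by a closed formula whose size is linear in the original. This preserves all inclusions and all homological $\ell$-equivalences, and keeps us within the input format required by the algorithm of \cite{basu-karisani}. At this point we have, over $\R\la\eps\ra$, a chain of pairwise closed-formula inclusions whose associated diagram is homologically $\ell$-equivalent to $D$.

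I would then run the algorithm from \cite{basu-karisani} (the same one used in the proof of Theorem~\ref{thm:functor}) simultaneously on this chain, producing a collection of finite simplicial complexes $\Delta^{S}_j,\Delta^{C}_j$ with $\Delta^{S}_{j-1},\Delta^{S}_j\subset\Delta^{C}_j$ such that the induced inclusions of geometric realizations are $\ell$-equivalent to the corresponding semi-algebraic inclusions. From this simplicial data, bases of $\HH_i(\Delta^{S}_j)$ for $0\le i\le\ell$ are obtained by row-reducing the boundary matrices of the simplicial chain complex, and each composed map
\[
\HH_i(\Delta^{S}_{j-1})\longrightarrow\HH_i(\Delta^{C}_j)\longleftarrow\HH_i(\Delta^{S}_j)
\]
is represented by inclusion of simplicial chains followed, on the isomorphism side, by inversion of an isomorphism of finite dimensional $\Q$-vector spaces; composing these gives the matrix of $\HH_i(f_j)$. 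The complexity bound $(nsd)^{k^{O(\ell)}}$ follows because the cylinder construction and the thickening multiply the input size by a polynomial factor in $n,s,d$, the algorithm of \cite{basu-karisani} runs in time $(sd)^{k^{O(\ell)}}$ on each of the $O(n)$ inclusions, and the final linear algebra over $\Q$ is polynomial in the (singly exponential) simplicial sizes.

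The main obstacle is handling the whole chain of inclusions \emph{coherently}: the algorithm of \cite{basu-karisani} produces a simplicial model for a single inclusion, and one must ensure that running it iteratively along the chain yields simplicial complexes in which each $\widetilde{S}_j$ embeds into both $\widetilde{C}_j$ and $\widetilde{C}_{j+1}$ compatibly, so that the composed zigzag on homology is actually computable. I would address this by applying the algorithm to each pair $(\widetilde{S}_j,\widetilde{C}_{j+1})$ and $(\widetilde{S}_j,\widetilde{C}_j)$ in a single combined instance and then gluing along the common simplicial model of $\widetilde{S}_j$, exactly as in the proof of Theorem~\ref{thm:functor}. A secondary technical point is bookkeeping the $\eps$-perturbations along the whole diagram so that the homotopy equivalences composing the chain in Definition~\ref{def:ell-equivalent} line up, but this is handled uniformly by working in $\R\la\eps_0,\ldots,\eps_n\ra$ with one infinitesimal per stage of the thickening.
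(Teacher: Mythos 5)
Your overall route is the paper's route: apply the semi-algebraic mapping cylinder $\tcyl(D)$ of Definition~\ref{def:cylinder-general}, invoke Proposition~\ref{prop:zigzag} to get a homologically equivalent zigzag diagram of inclusions, do quantifier elimination, replace the resulting quantifier-free formulas by closed formulas via Notation~\ref{not:phi-star} and Proposition~\ref{prop:closed} over a Puiseux-series extension, feed the tuple of closed formulas into the simplicial-replacement algorithm of \cite{basu-karisani}, and finish with linear algebra on the output simplicial complexes. That is exactly the proof of Theorem~\ref{thm:zigzag} followed by the one-line derivation of Theorem~\ref{thm:zigzag2} from it.

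Two details of your write-up are off, and the second is the kind of thing that would cost you real effort if you pursued it. First, you introduce extra intermediate objects $\widetilde{C}_j$ (``subcylinders'') between $\widetilde{S}_{j-1}$ and $\widetilde{S}_j$, which lengthens the zigzag and would force you to compose and invert extra maps. The paper's $\tcyl(D)$ is built so that the diagram keeps the same shape $\mathbf{Z}_n$: for odd $j$ the set $\widetilde{S}_j$ is a graph-like copy of $S_j$, for even $j$ the set $\widetilde{S}_j$ is the union of the two adjacent mapping cylinders (plus $\widetilde{S}_{j\pm 1}$), and then $\widetilde{S}_{j-1},\widetilde{S}_{j+1}\subset \widetilde{S}_j$ directly. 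No intermediate object is needed and the vertex-by-vertex homological equivalence $g_j:\widetilde{S}_j\rightarrow S_j$ is already a morphism of zigzags of the same shape. Second, what you describe as ``the main obstacle'' --- producing coherent simplicial models across the whole chain by running the algorithm pairwise and gluing along common submodels --- is a non-issue, and the gluing you propose is not something \cite{basu-karisani} supports as a black box. The cited theorem already takes as input the \emph{entire} tuple of closed formulas $(\overline{\widetilde{\phi}_0},\ldots,\overline{\widetilde{\phi}_n})$ and returns a \emph{single} simplicial complex $K$ together with subcomplexes $K_0,\ldots,K_n$ of $K$ so that the full diagram $\Simp^{[n]}$ of unions is homologically $\ell$-equivalent; the inclusions among the $\widetilde{S}_j$ are automatically reflected among the $K_j$ because they live in one ambient $K$. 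So the coherence you worry about is supplied for free by a single call, not assembled afterwards. With those two clarifications your argument coincides with the paper's.
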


\section{Mathematical Preliminaries}
\label{sec:prelim-math}
In this section we state and prove some mathematical results that will play a role in
the proofs of the main theorems.

\subsection{Replacing semi-algebraic maps by inclusion maps}
\label{subsec:inclusion}
The key idea that goes into the proof of Theorem~\ref{thm:functor} below is a semi-algebraic adaptation of the classical topological notion of a
mapping cylinder of a map $f:X \rightarrow Y$ which we recall now.

\begin{definition}[Mapping cylinder]
\label{def:cylinder-top}
Given a map $f: X \rightarrow Y$, the mapping cylinder $\cyl(f)$ of $f$ is the space defined by
\[
\cyl(f) = \left((X \times [0,1]) \coprod Y\right)/\sim,
\]
where for each $x \in X$, $(x,0) \sim f(x)$. 
\end{definition}

It is easy to prove that there exists a deformation retraction
$p:\cyl(f) \rightarrow Y$.
Denoting by $i:X \rightarrow \cyl(f)$, the inclusion $i(x) = (x,1)$,
we have a factorization $f = p \circ i$. Since, $p$ is a homotopy equivalence, one obtains that the inclusion $i:X \rightarrow \cyl(f)$
is homologically equivalent to the map $f$ via the commutative diagram
\[
\xymatrix{
X \ar[r]^i \ar[d]^{\mathrm{id}} & \cyl(f) \ar[d]^p \\
X \ar[r]^f & Y
}.
\]

Now suppose that $f:S \rightarrow T$ is a semi-algebraic map between
two closed and bounded semi-algebraic sets $S,T$. The mapping cylinder 
construction suggests a way to construct an inclusion map 
$i:S \hookrightarrow \cyl(f)$ which is homologically equivalent to $f$.
This is important for us since once we have replaced the given map $f$ by an inclusion, we can apply the main result in \cite{basu-karisani} to obtain a pair of simplicial complexes, $\Delta_1 \subset \Delta_2$ such that  the inclusion $|\Delta_1| \hookrightarrow |\Delta_1|$ is homologically $\ell$-equivalent to $i:S \hookrightarrow \cyl(f)$, and 
hence to $f$.

However, one obstruction to realizing the above goal is the fact that
the definition of the mapping cylinder involves taking a quotient. It is true that the quotient of a closed and bounded semi-algebraic set by
a proper equivalence relation is homeomorphic to a semi-algebraic set \cite{Dries} -- however, there is no algorithm known with a singly exponential complexity
for obtaining a semi-algebraic description of this quotient. 

We take a slightly different route. We define below a modification
of the classical mapping cylinder of a map $f$, which we denote by 
$\tcyl(f)$ (see Definition~\ref{def:cylinder})
which in the case where $f$ is a semi-algebraic map between closed and bounded semi-algebraic sets satisfies the same properties
as the classical mapping cylinder i.e. $f$ factorizes through an inclusion $i:S \rightarrow \tcyl(f)$ and a semi-algebraic homotopy equivalence $p:\tcyl(f) \rightarrow T$, so that finally $f = p \circ i$, and the inclusion $i:S \rightarrow \tcyl(f)$ is homologically equivalent to $f$. The main advantage of $\tcyl(f)$ is that, as a semi-algebraic set
it is described by a (existentially) quantified formula 
(see Eqn.\eqref{eqn:formula}) which is 
determined in a simple way from any first-order formulas defining the semi-algebraic sets $S$, $T$ and the graph of the map $f$. Using
effective quantifier-elimination algorithms we can then obtain
a quantifier-free formula defining $\tcyl(f)$.

There is one technical issue that creates complications in the 
above picture. If $S,T$ are closed and bounded semi-algebraic sets,
then the semi-algebraic set $\tcyl(f)$ is obtained as the image under projection of a closed and bounded semi-algebraic set, and is thus known
to be closed and bounded. However, even if we start with closed formulas
defining $S,T$ and $\mathrm{graph}(f)$, since the known effective quantifier-elimination algorithm with single exponential complexity that we use does not guarantee that the quantifier-free formula that we
obtain describing $\tcyl(f)$ is closed. It is important for the 
algorithm downstream that we use for simplicial replacement that 
this description be closed. We deal with this technical issue in a subsequent section. 

We now define $\tcyl(f)$.

\begin{definition}[Mapping cylinder for semi-algebraic maps]
\label{def:cylinder}
Let $S \subset \R^k, T \subset \R^m$ be semi-algebraic subsets and 
    $f:S \rightarrow T$ a semi-algebraic map.
    We denote
    \begin{equation}\label{eqn:tcyl}
				\tcyl(f) = \{ (\lambda \cdot x, f(x), \lambda) \ | \ x \in S, \lambda \in [0, 1]\} \cup \{ (\mathbf{0}, y, 0) \ | \ y \in T\},
			\end{equation}
\end{definition}			

With the above notation we have the following proposition.
\begin{proposition}
\label{prop:homotopy-equivalent}
Suppose that $S$ is closed and bounded and 
let $r: \tcyl(f) \rightarrow T$ be the map defined by
$r(x, y, \lambda) = y$. Then, $r$ is a homological equivalence.
\end{proposition}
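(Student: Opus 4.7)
The plan is to apply the homological version of the Vietoris--Begle theorem (as recalled in Remark~\ref{rem:homology-vs-homotopy}) to the semi-algebraic map $r$. To do this I need to verify three things: that $\tcyl(f)$ (and $T$) are closed and bounded semi-algebraic sets, that $r$ is semi-algebraic and continuous, and that for every $y \in T$ the fiber $r^{-1}(y)$ is homologically $\ell$-connected for every $\ell \geq 0$ (in fact, I will show it is contractible).

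For the first two points the verification is routine. The set
$\{(\lambda x, f(x), \lambda) \mid x \in S, \lambda \in [0,1]\}$
is the image of the compact semi-algebraic set $S \times [0,1]$ under the continuous semi-algebraic map $(x,\lambda) \mapsto (\lambda x, f(x), \lambda)$, hence is itself closed and bounded; the second summand $\{\mathbf{0}\} \times T \times \{0\}$ is closed and bounded because $T$ is (within the running hypothesis of the paper that both $S$ and $T$ are closed and bounded). Continuity and semi-algebraicity of $r$ are immediate from its being (the restriction of) a coordinate projection.

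The main step is the fiber computation. Fix $y \in T$. If $y \notin f(S)$, then $r^{-1}(y) = \{(\mathbf{0},y,0)\}$ is a single point. If $y \in f(S)$, then
\[
r^{-1}(y) = \{(\lambda x, y, \lambda) \mid x \in f^{-1}(y),\ \lambda \in [0,1]\}.
\]
The formula $H_t(z,y,\lambda) = (t z, y, t\lambda)$ for $t \in [0,1]$ depends only on $z = \lambda x$ and on $\lambda$, so it is a well-defined semi-algebraic map on $r^{-1}(y)$; moreover, writing $t z = (t\lambda) x$ with $x \in f^{-1}(y)$ and $t\lambda \in [0,1]$ shows that the image of $H_t$ is contained in the fiber. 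Thus $H_t$ is a semi-algebraic deformation retraction of $r^{-1}(y)$ onto the apex $(\mathbf{0}, y, 0)$, and in both cases $r^{-1}(y)$ is contractible, hence in particular acyclic.

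Having established contractibility of every fiber, the homological Vietoris--Begle theorem yields that $\HH_i(r) \colon \HH_i(\tcyl(f)) \to \HH_i(T)$ is an isomorphism for every $i \geq 0$, which is exactly the assertion that $r$ is a homological equivalence. The only mildly subtle point, and the one I would take care to spell out, is the well-definedness of the homotopy $H_t$ on $r^{-1}(y)$: the coordinate expression $(\lambda x, y, \lambda)$ is generally non-unique (different $x \in f^{-1}(y)$ and different $\lambda$ can give the same point when $\lambda = 0$), but writing $H_t(z, y, \lambda) = (tz, y, t\lambda)$ in terms of the ambient coordinates $(z,y,\lambda)$ alone makes the issue disappear. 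Over an arbitrary real closed field, the applicability of Vietoris--Begle is justified, as noted in Remark~\ref{rem:homology-vs-homotopy}, by Tarski--Seidenberg transfer from $\mathbb{R}$ together with triangulability of closed bounded semi-algebraic sets.
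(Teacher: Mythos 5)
Your proposal is correct and follows essentially the same route as the paper's proof: both invoke the homological version of the Vietoris--Begle theorem after showing that every fiber of $r$ is contractible. The paper identifies the fiber over a point of $\mathrm{Im}(f)$ as the cone over $f^{-1}(y)$ and declares it contractible, whereas you write out the contracting homotopy $H_t(z,y,\lambda)=(tz,y,t\lambda)$ explicitly and also take care to verify the remaining hypotheses of Vietoris--Begle (that $\tcyl(f)$ is closed and bounded and that $r$ is a continuous semi-algebraic map); these extra checks are correct and slightly more careful than the paper's exposition, but the argument is the same.
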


\begin{proof}
It follows from Eqn. \eqref{eqn:tcyl}, that
for $y \in T$, 
\[
r^{-1}(y) = \{ (\lambda \cdot x, y , \lambda) \ | \ x \in S, f(x) = y, \lambda \in [0, 1]\} \cup \{(\mathbf{0}, y, 0)\}.
\]
There are two cases.
\begin{enumerate}
\item
If $y \in \mathrm{Im}(f)$, then
\[
r^{-1}(y) = \{ (\lambda \cdot x, y , \lambda) \ | \ x \in S, f(x) = y, \lambda \in [0, 1]\},
\]
which is semi-algebraically homeomorphic to the cone over $f^{-1}(y)$
and hence semi-algebraically contractible.
\item
If $y \not\in \mathrm{Im}(f)$, then
\[
r^{-1}(y) = \{(\mathbf{0}, y, 0)\}
\]
and hence semi-algebraically contractible.
\end{enumerate}
The proposition now follows from the homological version of the Vietoris-Begle theorem 
(see Remark~\ref{rem:homology-vs-homotopy}).
\end{proof}		

\begin{proposition}
\label{prop:injective}
Let 
\[
i: S \rightarrow \tcyl(f)  
\]
be the injective map $x \mapsto(x, f(x), 1)$. 
Then the following diagram is commutative.

\begin{equation}\label{diag:cyl}
				\begin{tikzcd}
					S \arrow[r, "f"] \arrow[d, hook, "i"] & T \arrow[d, "id"] \\
					\tcyl(f)\arrow[r, "r"]& T
				\end{tikzcd}
\end{equation}
\end{proposition}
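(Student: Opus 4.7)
The plan is essentially a direct verification: there are three things to check, all of which follow immediately from the definitions of $\tcyl(f)$, $i$, and $r$.

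First, I would verify that the map $i$ is actually well-defined, i.e., that its image lands in $\tcyl(f)$. For any $x \in S$, we have $i(x) = (x, f(x), 1) = (1 \cdot x, f(x), 1)$, which is an element of the first set appearing in the definition \eqref{eqn:tcyl} of $\tcyl(f)$ (taking $\lambda = 1$). So $i$ is a well-defined map $S \to \tcyl(f)$.

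Next, I would check injectivity of $i$. This is immediate: if $i(x) = i(x')$ for $x,x' \in S$, then the first coordinates of the triples $(x, f(x), 1)$ and $(x', f(x'), 1)$ must coincide, so $x = x'$.

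Finally, to verify that the diagram \eqref{diag:cyl} commutes, I would trace an arbitrary element $x \in S$ along the two compositions. Going right then down yields $\mathrm{id}_T(f(x)) = f(x)$. Going down then right yields $r(i(x)) = r(x, f(x), 1) = f(x)$ by the definition of $r$. Since both compositions agree on every $x \in S$, the diagram commutes. There is no real obstacle here; the statement is essentially a bookkeeping check of the definitions, and its purpose is to set up the factorization $f = r \circ i$ (with $r$ already shown to be a homological equivalence in Proposition~\ref{prop:homotopy-equivalent}) so that $i$ may serve as the promised inclusion replacement for $f$.
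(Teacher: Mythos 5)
Your proof is correct and follows the same approach as the paper, which simply declares the claim immediate from the definitions of $\tcyl(f)$ and $r$; you have merely spelled out the bookkeeping (well-definedness, injectivity, and the commutativity check $r\circ i = \mathrm{id}_T\circ f$) explicitly.
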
	

\begin{proof}
This is immediate from the definition of $\tcyl(f)$ (Eqn. \eqref{eqn:tcyl})
and the definition of the map $r$.
\end{proof}

\begin{proposition}
\label{prop:tcyl}
Suppose that $S$ is closed and bounded. Then the inclusion map,
$i(S) \hookrightarrow \tcyl(f)$ is homologically equivalent to
$f:S \rightarrow T$.
\end{proposition}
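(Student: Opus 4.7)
The plan is to assemble the two preceding propositions into the formal notion of homological equivalence of maps given in Definition~\ref{def:equivalence-spaces} (and hence of Definition~\ref{def:ell-equivalent}). Proposition~\ref{prop:injective} produces the commutative square
\[
\begin{tikzcd}
S \arrow[r, "\mathrm{id}_S"] \arrow[d, hook, "i"'] & S \arrow[d, "f"] \\
\tcyl(f) \arrow[r, "r"'] & T
\end{tikzcd}
\]
(which is the square of diagram~\eqref{diag:cyl} read with the vertical arrows grouped as the two maps under comparison). Setting $F_X = \mathrm{id}_S$ and $F_Y = r$, the identity $f \circ F_X = F_Y \circ i$ is exactly the commutativity of this square.

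Next, I would check that $F_X$ and $F_Y$ are both homological equivalences. For $F_X = \mathrm{id}_S$ this is trivial. For $F_Y = r$ this is precisely the content of Proposition~\ref{prop:homotopy-equivalent}, which requires the hypothesis that $S$ is closed and bounded (so that the fibres $r^{-1}(y)$, being either cones over closed and bounded semi-algebraic sets or singletons, are semi-algebraically contractible, and the homological Vietoris--Begle theorem applies). By Definition~\ref{def:equivalence-spaces} this exhibits a homological equivalence between the maps $i\colon S \to \tcyl(f)$ and $f\colon S \to T$; taking a one-step zigzag in Definition~\ref{def:ell-equivalent} then gives that $i$ and $f$ are homologically $\ell$-equivalent (for every $\ell$).

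Finally, to pass from the map $i\colon S \to \tcyl(f)$ to the inclusion $i(S) \hookrightarrow \tcyl(f)$ as stated in the proposition, I would note that $i(x) = (x, f(x), 1)$ has continuous inverse given by projection to the first coordinate, so $i\colon S \to i(S)$ is a semi-algebraic homeomorphism and induces an isomorphism on homology in every degree. Precomposing the homological equivalence above with this homeomorphism (or equivalently, inserting an extra step in the zigzag of Definition~\ref{def:ell-equivalent}) identifies $i$ with the inclusion $i(S) \hookrightarrow \tcyl(f)$ up to homological equivalence, completing the proof.

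There is no real obstacle here: all the content has already been packaged into Propositions~\ref{prop:homotopy-equivalent} and \ref{prop:injective}, and the remaining work is the (purely formal) matching of these outputs to the definition of homological $\ell$-equivalence between maps. The only point worth being careful about is the hypothesis that $S$ is closed and bounded, which is needed exactly once, namely to invoke Proposition~\ref{prop:homotopy-equivalent}.
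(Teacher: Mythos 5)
Your proof is correct and follows exactly the route the paper intends: the paper's own proof is the one-liner ``Follows directly from Propositions~\ref{prop:homotopy-equivalent} and~\ref{prop:injective},'' and you have simply unpacked it by fitting the commutative square~\eqref{diag:cyl} into Definition~\ref{def:equivalence-spaces} with $F_X = \mathrm{id}_S$ and $F_Y = r$, noting where the closed-and-bounded hypothesis is used. The extra remark that $i\colon S \to i(S)$ is a homeomorphism, so the map $i$ and the inclusion $i(S)\hookrightarrow \tcyl(f)$ are interchangeable, is a detail the paper leaves implicit but is worth including.
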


\begin{proof}
Follows directly from Propositions~\ref{prop:homotopy-equivalent} and
\ref{prop:injective}.
\end{proof}

\begin{proposition}
Let  \[
\phi_S(X_1,\ldots,X_k), \phi_T(Y_1,\ldots,Y_m), \phi_f(X_1,\ldots,X_k,Y_1,\ldots,Y_m),
\]
be first order formulas 
such that
$\RR(\phi_f,\R^k \times \R^m)$ is the graph of a semi-algebraic map
$f: S = \RR(\phi_S) \rightarrow  \RR(\phi_T) = T$.

Let
\begin{multline}
\label{eqn:formula}
				\Theta_{\phi_S,\psi_T,\phi_f}(\overline{X},\overline{Y},T) = \ \big( \ (T=0) \wedge \overline{X}=\mathbf{0} \wedge \phi_{T}(\overline{Y}) \ \big) \ \vee \\
				\big( \ (0 \leq T \leq 1) \wedge \exists \ \overline{Z} \ (\overline{X}=T \overline{Z} \wedge \phi_S(\overline{Z}) ) \wedge \phi_f(\overline{Z}, \overline{Y}) \wedge \phi_{T}(\overline{Y}) \ \big).
\end{multline}
Then, 
\[
\RR(\Theta_{\phi_S,\psi_T,\phi_f}) = \tcyl(f).
\]
Moreover,
\[
\RR(\Theta_{\phi_S,\psi_T,\phi_f} \wedge (T=1)) =  i(S) \hookrightarrow \tcyl(f).
\]
\end{proposition}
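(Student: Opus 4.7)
The plan is to verify the two claimed set equalities directly by unpacking the formula and comparing with the set-theoretic description in Equation~\eqref{eqn:tcyl}. Both are equalities of subsets of $\R^k\times\R^m\times\R$, so I will argue membership in both directions, keeping in mind the defining property $\RR(\phi_f,\R^k\times\R^m)=\mathrm{graph}(f)$ and the fact that $\mathrm{Im}(f)\subset T=\RR(\phi_T)$.

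First I would handle the equality $\RR(\Theta_{\phi_S,\phi_T,\phi_f})=\tcyl(f)$. The formula $\Theta_{\phi_S,\phi_T,\phi_f}$ is a disjunction of two clauses. The first clause $(T=0)\wedge(\overline X=\mathbf{0})\wedge\phi_T(\overline Y)$ is realized exactly by the set $\{(\mathbf{0},y,0)\mid y\in T\}$, which is the second piece in the definition of $\tcyl(f)$. The second clause, after replacing the existentially quantified $\overline Z$ with a witness $x\in S$, is realized by triples $(Tx,\overline Y,T)$ with $0\leq T\leq 1$, $x\in S$, $(x,\overline Y)\in\mathrm{graph}(f)$, and $\overline Y\in T$; since $(x,\overline Y)\in\mathrm{graph}(f)$ forces $\overline Y=f(x)$ (and automatically $f(x)\in T$), this clause is realized by exactly $\{(\lambda x,f(x),\lambda)\mid x\in S,\ \lambda\in[0,1]\}$, i.e.\ the first piece of $\tcyl(f)$. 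Taking the union of the two realizations yields $\tcyl(f)$. Conversely, every point of $\tcyl(f)$ lies in one of these two pieces and hence satisfies one of the two disjuncts, so $\tcyl(f)\subseteq\RR(\Theta_{\phi_S,\phi_T,\phi_f})$.

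For the second equality $\RR(\Theta_{\phi_S,\phi_T,\phi_f}\wedge(T=1))=i(S)$, I would observe that the first disjunct is incompatible with $T=1$ (since it requires $T=0$), so the conjunction reduces to the second disjunct evaluated at $T=1$. At $T=1$ the condition $\overline X=T\overline Z$ collapses to $\overline X=\overline Z$, so the second disjunct becomes $\phi_S(\overline X)\wedge\phi_f(\overline X,\overline Y)\wedge\phi_T(\overline Y)$, which picks out exactly the triples $(x,f(x),1)$ with $x\in S$; this is precisely $i(S)$ as defined in Proposition~\ref{prop:injective}.

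The only mild subtlety, and the one I would flag explicitly, is the overlap between the two disjuncts when $T=0$: the second disjunct at $T=0$ forces $\overline X=\mathbf{0}$ and $\overline Y\in\mathrm{Im}(f)$, which is already contained in the realization of the first disjunct because $\mathrm{Im}(f)\subset T$. Thus the formula is slightly redundant but the realization is still exactly $\tcyl(f)$, matching the analogous overlap between the two pieces of the set-theoretic definition (the slice $\lambda=0$ of the first piece lies inside the second piece). No deeper obstacle appears; the proof is a routine symbolic unwinding of Definition~\ref{def:cylinder} against the formula in Equation~\eqref{eqn:formula}.
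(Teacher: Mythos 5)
Your verification is correct and matches the paper's approach: the paper's proof is simply ``Follows directly from the definition of $\tcyl(f)$ (Eqn.~\eqref{eqn:tcyl}),'' and you have spelled out that direct unwinding, disjunct by disjunct. The observation about the benign overlap of the two disjuncts at $T=0$ is a nice sanity check but not needed for the equality.
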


\begin{proof}
Follows directly from the definition of $\tcyl(f)$ (Eqn. \eqref{eqn:tcyl}). \end{proof}

\subsection{Replacing a closed semi-algebraic set by a semi-algebraic set defined by a closed formula}
\label{subsec:making-closed}
One basic open problem in algorithmic semi-algebraic geometry is to design an efficient algorithm which takes as input a quantifier-free formula $\phi$ such that $\RR(\phi)$ is a closed semi-algebraic set $S \subset \R^k$, and produces as output a
finite set $\mathcal{P} \subset \R[X_1,\ldots,X_k]$ and a \emph{$\mathcal{P}$-closed} formula $\psi$ such that $\RR(\phi) = \RR(\psi)$.
No algorithm with a singly exponential complexity is known for this problem.

In the absence of an efficient algorithm for solving the above problem, 
we consider the following substitute that is often enough for application.
A fundamental construction due to Gabrielov and Vorobjov \cite{GaV} gives
an efficient procedure to replace an arbitrary semi-algebraic set by a 
closed and bounded one having the same homotopy type. 
This homotopy equivalence is usually not a deformation retraction.

We describe below a construction similar to that in \cite{GaV},
when applied to a formula $\phi$  such that $\RR(\phi,B)$
is a closed semi-algebraic subset of a closed Euclidean ball $B \subset \R^k$, produces 
a closed formula $\psi$ defined over a real closed extension $\R'$ of $\R$, such that the extension of $\RR(\phi,B)$ to $\R'^k$ is a semi-algebraic deformation retraction of $\RR(\psi,B)$.

But we first need to introduce some preliminary definitions and notation.

\subsection{Real closed extensions and Puiseux series}
\label{subsec:Puiseux}
	We will need some
	properties of Puiseux series with coefficients in a real closed field. We
	refer the reader to \cite{BPRbook2} for further details.
	
	\begin{notation}
	\label{not:Puiseux}
		For $\R$ a real closed field we denote by $\R \left\langle \eps
		\right\rangle$ the real closed field of algebraic Puiseux series in $\eps$
		with coefficients in $\R$. We use the notation $\R \left\langle \eps_{1},
		\ldots, \eps_{m} \right\rangle$ to denote the real closed field 
		\[
		\R
		\left\langle \eps_{1} \right\rangle \left\langle \eps_{2} \right\rangle
		\cdots \left\langle \eps_{m} \right\rangle.
		\]
		Note that in the unique
		ordering of the field $\R \left\langle \eps_{1}, \ldots, \eps_{m}
		\right\rangle$, $0< \eps_{m} \ll \eps_{m-1} \ll \cdots \ll \eps_{1} \ll 1$.
		
	\end{notation}
	
	\begin{notation}
		\label{not:lim}
		For elements $x \in \R \left\langle \eps \right\rangle$ which are bounded
		over $\R$ we denote by $\lim_{\eps}  x$ to be the image in $\R$ under the
		usual map that sets $\eps$ to $0$ in the Puiseux series $x$.
	\end{notation}
	
	\begin{notation}
		\label{not:extension}
		If $\R'$ is a real closed extension of a real closed field $\R$, and $S
		\subset \R^{k}$ is a semi-algebraic set defined by a first-order formula
		with coefficients in $\R$, then we will denote by $\E(S, \R') \subset \R'^{k}$ the semi-algebraic subset of $\R'^{k}$ defined by
		the same formula.
		It is well known that $\E(S, \R')$ does
		not depend on the choice of the formula defining $S$ 
		\cite[Proposition 2.87]{BPRbook2}.
	\end{notation}

Let $\mathcal{P} = \{P_1,\ldots,P_s\}
\subset \R[X_1,\ldots,X_k]$ be a finite set of polynomials, and let $B \subset \R^k$ be a closed euclidean ball.

\begin{notation}
\label{not:level}
For $\sigma \in \{0,1,-1\}^\mathcal{P}$, let 
\[
\level(\sigma) = \card(\{P \in \mathcal{P} \mid \sigma(P) = 0 \}).
\]
\end{notation}	

For $c,d \in \R, 0< d < c$, and $\sigma \in \{0,1,-1\}^\mathcal{P}$, let
$\overline{\sigma}(c,d)$ denote the closed formula
\[
\bigwedge_{\sigma(P) = 0} (-d \leq P \leq d) \wedge \bigwedge_{\sigma(P) = 1} (P \geq c) \wedge \bigwedge_{\sigma(P) = -1} (P \leq -c).
\]

\begin{notation}
\label{not:Sigma-phi}
For a $\mathcal{P}$-formula $\phi$ we denote 
\[
\Sigma_{\phi} = 
\left\{ \sigma \in \{0,1,-1\}^\mathcal{P} \mid 
\left(\bigwedge_{P \in \mathcal{P}} (\mathrm{sign}(P) = \sigma(P))\right) \Rightarrow \phi \right\},
\]
where ``$\Rightarrow$''  denotes logical implication.
\end{notation}

Let 
\[
\R' = \R\la \mu_s,\nu_s, \cdots, \mu_0, \nu_0\ra = \R\la\bar\eta\ra,
\]
denoting by $\bar\eta$ the sequence $\mu_s,\nu_s, \ldots, \mu_0, \nu_0$.

\begin{notation}
\label{not:P-star}
We denote
\[
\mathcal{P}^*(\bar{\mu},\bar{\nu}) = \bigcup_{P \in \mathcal{P}} \bigcup_{j = 0}^{s} \{P \pm \mu_j, P\pm \nu_j\} \subset \R'[X_1,\ldots,X_k].
\]
\end{notation}

Finally, 
\begin{notation}
\label{not:phi-star}
We denote by
$\overline{\phi(\bar{\mu},\bar{\nu})}$ the $\mathcal{P}^*(\bar{\mu},\bar{\nu})$-\emph{closed} formula
\[
\bigvee_{\sigma \in \Sigma_{\phi}} \overline{\sigma}(\mu_{\level(\sigma)},\nu_{\level(\sigma)})
\]
(see Notation~\ref{not:Sigma-phi}).
\end{notation}

Following the notation introduced above. 
\begin{proposition}
\label{prop:closed}
Let $R > 0, B = \overline{B_k(0,R)}$, and 
suppose that $S = \RR(\phi,B)$ is closed. Then,
\[
S'  \searrow S,
\]
where $S' = \RR(\overline{\phi(\bar{\mu},\bar{\nu})},\E(B,\R'))$.
In particular, $\E(S,\R')$ is a semi-algebraic deformation retract of 
$S'$.
\end{proposition}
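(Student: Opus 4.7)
The plan is to prove this in two stages: first, the set-theoretic inclusion $\E(S,\R') \subseteq S'$, which already uses closedness of $S$ in a subtle way; and second, the construction of a semi-algebraic deformation retraction $S' \searrow \E(S,\R')$. The approach follows the spirit of Gabrielov--Vorobjov \cite{GaV}, simplified by the fact that $S$ is already closed, so no modification of $S$ itself is required.

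For the inclusion, I would first use closedness of $S$ to observe that $\Sigma_\phi$ is closed under specialization: if $\sigma \in \Sigma_\phi$ and $\sigma'$ is obtained from $\sigma$ by changing some strict signs to $0$, then $\RR(\sigma') \subseteq \overline{\RR(\sigma)} \subseteq \overline{\RR(\phi)} = \RR(\phi)$, so $\sigma' \in \Sigma_\phi$. Given $y \in \E(S,\R')$ with exact sign condition $\sigma_y$ (which lies in $\Sigma_\phi$ since $\phi$ depends only on the signs of polynomials in $\mathcal{P}$), I would look for some $\sigma \in \Sigma_\phi$ whose thickening $\overline{\sigma}(\mu_{\level(\sigma)}, \nu_{\level(\sigma)})$ is satisfied at $y$. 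The candidate is obtained from $\sigma_y$ by \emph{bumping up} the level: any $P$ with $\sigma_y(P) = \pm 1$ whose value $|P(y)|$ is too small to clear the threshold $\mu_{\level(\sigma_y)}$ has its sign set to $0$. The gap $\mu_\ell \ll \nu_{\ell+1}$ in the hierarchy guarantees that the new equality atoms are automatically satisfied at the bumped-up level, while specialization keeps the resulting sign condition in $\Sigma_\phi$; this yields $y \in S'$.

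For the deformation retraction, I would carry out the Gabrielov--Vorobjov stratified retraction. Exploiting the hierarchy $0 < \nu_0 \ll \mu_0 \ll \nu_1 \ll \mu_1 \ll \cdots \ll 1$, the thickening at level $\ell$ is orders of magnitude thinner than those at lower levels. Starting from the highest level and working downward, I would deform $S'$ onto $\E(S,\R')$ stratum by stratum: within each level-$\ell$ strip $\overline{\sigma}(\mu_\ell,\nu_\ell)$, a continuous semi-algebraic retraction pushes the equality-atom polynomials to zero, landing on the exact stratum $\RR(\sigma)$, which is contained in $\E(S,\R')$ because $\sigma \in \Sigma_\phi$. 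The retractions at different levels splice together because any trajectory exiting a higher-level strip can only land inside a thicker, lower-level strip already handled at the next stage of the induction.

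The main obstacle I expect is the careful bookkeeping required for both steps. For the inclusion, identifying the right $\sigma \in \Sigma_\phi$ for a given $y$ requires a case analysis on the values $|P(y)|$ relative to the various thresholds $\nu_\ell, \mu_\ell$, and the bumping procedure may need to be iterated across several levels. For the retraction, ensuring continuity of the composite map across boundaries between strips of different levels, and verifying semi-algebraicity of the piecewise definition, both rely on the infinitesimal separation of the parameters $\mu_j, \nu_j$. Once these compatibility issues are dealt with, the final conclusion that $\E(S,\R')$ is a semi-algebraic deformation retract of $S'$ follows.
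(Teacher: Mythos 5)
The paper does not actually supply a proof of this proposition; it is a one-line citation to the appendix of \cite{basu-karisani}. So your attempt must be judged on its own merits, and it has a genuine gap in the inclusion step.

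The step asserting that $\Sigma_\phi$ is closed under specialization fails, and the intermediate claim $\RR(\sigma') \subseteq \overline{\RR(\sigma)}$ that you use to justify it is false. Take $k=1$, $\mathcal{P} = \{P_1,P_2\}$ with $P_1 = X^2(X-1)$, $P_2 = X$, $B = [-2,2]$, and $\phi$ any $\mathcal{P}$-formula with $\RR(\phi,B) = [1,2]$, which is closed. Then $\sigma = (\sigma(P_1),\sigma(P_2)) = (0,1)$ lies in $\Sigma_\phi$ since $\RR(\sigma,B) = \{1\}$, but the specialization $\sigma' = (0,0)$ has $\RR(\sigma',B) = \{0\} \not\subseteq [1,2]$, so $\sigma' \notin \Sigma_\phi$; and indeed $\RR(\sigma') = \{0\} \not\subseteq \{1\} = \overline{\RR(\sigma)}$. (Likewise $\sigma = (1,1) \in \Sigma_\phi$, and here $\overline{\RR(\sigma)} = [1,2] \not\supseteq \RR((0,0)) = \{0\}$.) The error is that the closure of $\RR(\sigma)$ need not meet every realization of a specialization of $\sigma$: a polynomial in an equality atom of $\sigma'$ may vanish on an entirely different component of its zero set than the one adjacent to $\RR(\sigma)$. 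Closedness of $S$ gives you $\overline{\RR(\sigma)} \subseteq \RR(\phi)$, but that is not the inclusion you need. Consequently you have not shown that the sign condition $\sigma^{(m)}$ produced by the bumping procedure belongs to $\Sigma_\phi$, which is precisely the point at which one must use both $\sigma_y \in \Sigma_\phi$ and the fact that $\sigma_z \in \Sigma_\phi$ where $z = \lim_{\bar\eta} y \in S$; the correct argument has to sandwich $\sigma^{(m)}$ between these and exploit the geometry of which polynomials can become small along a path from $z$ to $y$, which is subtler than unrestricted specialization.

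The outline of the deformation retraction is plausible and is in the right spirit (retract level by level from the thinnest strata downward), but as you note it is a sketch; the inclusion is where the reasoning as written actually breaks.
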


\begin{proof}
See \cite[Appendix]{basu-karisani}.
\end{proof}

\hide{
\begin{remark}
\label{rem:16:lem:star}
It is necessary to use multiple infinitesimals in the construction given above. As a warning consider the following example.

\begin{example}
Let $k=1,s=2, B = [-2,2]$, and
\begin{eqnarray*}
P_1 &=& X^2(X-1), \\
P_2 &=& X.
\end{eqnarray*}
Let $\sigma_1,\sigma_2$ be defined by,
$$\displaylines{
\sigma_1(P_1) =1, \sigma_2(P_2) = 1, \cr
\sigma_1(P_1) =0, \sigma_2(P_2) = 1.
}
$$
Let $\phi$ be the unique formula such that $\Sigma_{\phi} = \{\sigma_1,\sigma_2\}$.
Then, $\RR(\phi,B) = [1,2]$ is a closed semi-algebraic set, but
$\phi$ is not a closed formula.

However, if we take the closed formula $\phi^*(\mu_0,\ldots,\mu_0)$
(i.e. using only one infinitesimal)
then
\[
\lim_{\mu_0} \RR(\phi^*(\mu_0,\ldots,\mu_0),B) = \{0\} \cup [1,2] \supsetneq \RR(\phi,B).
\]
However, it is easy to verify that
\[
\RR(\phi^*(\bar\mu,\bar\nu),B) \searrow  \RR(\phi,B) = [1,2].
\]
\end{example}
\end{remark}
}

\section{Proofs of the theorems}
\label{sec:proof}
\subsection{Algorithmic Preliminaries}
\label{subsec:algo-preliminaries}

We recall the following definition from \cite{basu-karisani}.

\begin{notation} [Diagram of various unions of a finite number of subspaces]
\label{not:diagram-Delta}
Let $J$ be a finite set, $A$ a topological space, 
and $\mathcal{A} = (A_j)_{j \in J}$ a tuple of subspaces of $A$  indexed by $J$.

For any subset 
$J' \subset J$,
we denote 
\begin{eqnarray*}
\mathcal{A}^{J'} &=& \bigcup_{j' \in J'} A_{j'}, \\
\mathcal{A}_{J'} &=& \bigcap_{j' \in J'} A_{j'}, \\
\end{eqnarray*}

We consider $2^J$ as a category whose objects are elements of $2^J$, and whose only morphisms 
are given by: 
\begin{eqnarray*}
2^J(J',J'') &=& \emptyset  \mbox{ if  } J' \not\subset J'', \\
2^J(J',J'') &=& \{\iota_{J',J''}\} \mbox{  if } J' \subset J''.
\end{eqnarray*} 
We denote by $\Simp^J(\mathcal{A}):2^J \rightarrow \Top$ the functor (or the diagram) defined by
\[
\Simp^J(\mathcal{A})(J') = \mathcal{A}^{J'}, J' \in 2^J,
\]
and
$\Simp^J(\mathcal{A})(\iota_{J',J''})$ is the inclusion map $\mathcal{A}^{J'} \hookrightarrow \mathcal{A}^{J''}$.
\end{notation}

We will use an algorithm whose existence is proved in \cite[Theorem 1]{basu-karisani},
and which we will refer to as \emph{Algorithm for computing simplicial replacement},
that given a tuple of closed-formulas 
$\Phi = (\phi_0,\ldots,\phi_N)$, $R >0$,  and $\ell \geq 0$,
produces as output 
a simplicial complex $K$
and subcomplexes $K_i, 0\leq i \leq N$ of $K$, such that 
the diagram
\[
\Simp^{[N]}\left((\RR(\phi_i,\overline{B_k(0,R)}))_{i \in [N]}\right) 
\]
is homologically $\ell$-equivalent (\cite[Section 2.1.1]{basu-karisani}) 
to the diagram
\[
\Simp^{[N]}\left((|K_i|)_{i \in [N]}\right)
\]
(where $|K_i| \subset |K|$ is the geometric realization of $K_i$ and
$[N] = \{0,\ldots,N\}$).

We refer the reader to \cite{basu-karisani} for the details.

The complexity of this algorithm, as well as the size of the output
 simplicial complex $\Delta$, 
are bounded by 
\[
 (N s d)^{k^{O(m)}},
\]
where $s = \card(\mathcal{P})$, and $d = \max_{P \in \mathcal{P}} \deg(P)$.

\subsection{Proofs of Theorems~\ref{thm:functor2} and \ref{thm:functor}}
\label{subsec:proof}
We first prove Theorem~\ref{thm:functor} by describing an algorithm 
(Algorithm~\ref{alg:functor} below) and proving its correctness and analyzing its 
complexity. Theorem~\ref{thm:functor2} will then follow in 
a straightforward way.

	\begin{algorithm}[H]
		\caption{(Applying homology functor to semi-algebraic maps)}
		\label{alg:functor}
		\begin{algorithmic}[1]
			\INPUT
			\Statex{
				\begin{enumerate}[1.]
				\item $\ell \geq 0$;
				\item
a finite sets of polynomials $\mathcal{P}_S \subset \D[X_1,\ldots,X_k], ,\mathcal{P}_T \subset \D[Y_1,\ldots,Y_m], \mathcal{P}_f \subset \D[X_1,\ldots,X_k,Y_1,
\ldots Y_m]$;
                \item
                $\mathcal{P}_S$-closed formulas $\phi_S$, $\mathcal{P}_T$-closed formula $\phi_T$, 
                and $\mathcal{P}_f$-closed formula $\phi_f$,
            such that
            $\RR(\phi_S), \RR(\phi_T)$ are bounded and 
            $\RR(\phi_f,\R^k \times \R^m)$ is the graph of a semi-algebraic map $f: S = \RR(\phi_S) \rightarrow  \RR(\phi_T) = T$.
            \end{enumerate}
            }
			\OUTPUT
			\Statex{
				Simplicial complexes $\Delta_S, \Delta_T, \Delta_S \subset \Delta_T$, such that
$|\Delta_S| \hookrightarrow |\Delta_Y|$ is homologically 
$\ell$-equivalent to $f: S \rightarrow T$.
			}

\hide{
			\algstore{myalg}
		\end{algorithmic}
	\end{algorithm}
	
	\begin{algorithm}[H]
		\begin{algorithmic}[1]
			\algrestore{myalg}
}		
			
			\PROCEDURE
			\State{Let $\eps$ be an infinitesimal and $\R \gets \R\la\eps\ra$.}
			\State{Call Algorithm 14.5 in \cite{BPRbook2}  (Quantifier Elimination)  with input the existentially quantified formula 
			$\Theta_{\phi_S,\psi_T,\phi_f}$
			to obtain a quantifier-free formula $\Theta'_{\phi_S,\psi_T,\phi_f}$ equivalent to $\Theta_{\phi_S,\psi_T,\phi_f}$.}
			
			\State{ 
			$\Theta''_{\phi_S,\psi_T,\phi_f} \gets  \Theta'_{\phi_S,\psi_T,\phi_f} \wedge (T=1)$.
			}
						
			\State{
			\[
			\mathcal{P} \gets \mbox{ the set of polynomials appearing in the formula $\Theta''_{\phi_S,\psi_T,\phi_f}$}.
			\]
			}
			
			\State{Apply \emph{Algorithm for computing simplicial replacement}, with input the pair of formulas
			$(\overline{\Theta''_{\phi_S,\psi_T,\phi_f}}, \overline{\Theta'_{\phi_S,\psi_T,\phi_f}})$ 
			(recall Notation~\ref{not:phi-star})
			and $R = \frac{1}{\eps}$, and obtain as output a
			simplicial complex $K$, and subcomplexes $K_1, K_2$ of $K$.
			}
			\State{$\Delta_S \gets K_1, \Delta_T \gets K_1 \cup K_2$.}
			\State{Output the pair $(\Delta_S,\Delta_T)$.}

			\COMPLEXITY
			The complexity of the algorithm is bounded by 
\[
(s d)^{(k+m)^{O(\ell)}},
\]
where
\[
s = \max(\card(\mathcal{P}_S), \card(\mathcal{P}_T), \card(\mathcal{P}_f)),
\]
and
\[
d = \max_{P \in \mathcal{P}_S \cup \mathcal{P}_T \cup \mathcal{P}_f} \deg(P).
\]
			
		\end{algorithmic}
	\end{algorithm}
	
	\begin{proof}[Proof of correctness]
	The correctness of the algorithm follows from the correctness of 
	Algorithm 14.5 in \cite{BPRbook2} (Quantifier Elimination), Propositions~\ref{prop:closed} and
	\ref{prop:tcyl}
	 as well as the correctness of 
	Algorithm for computing simplicial replacements in \cite{basu-karisani}.
	\end{proof}
	
	\begin{proof}[Complexity analysis]
	The complexity of the algorithm follows from the complexity of
	Algorithm 14.5 in \cite{BPRbook2} (Quantifier Elimination), and the Algorithm for computing simplicial replacement.
	\end{proof}
	
	\begin{proof}[Proof of Theorem~\ref{thm:functor}]
		The theorem follows from the correctness and the complexity analysis of Algorithm~\ref{alg:functor}.
	\end{proof}
	
	\begin{proof}[Proof of Theorem~\ref{thm:functor2}]
	Theorem~\ref{thm:functor2} follows from Theorem~\ref{thm:functor} after observing that
	once we have the finite simplicial complexes $\Delta_S, \Delta_T$ with $\Delta_S \subset \Delta_T$, then using standard
	algorithms from linear algebra (Gauss-Jordan elimination) one can compute bases
	of $\HH_i(\Delta_S)$ and $\HH_i(\Delta_T), 0 \leq i \leq \ell$, and 
	the matrix for the map $\HH(j)_i$, where $j:\Delta_S \hookrightarrow \Delta_T$ is the
	inclusion map. We omit the details but it is clear that the complexity of this step is bounded polynomially in the size of $\Delta_T$. This proves
	Theorem~\ref{thm:functor2}.
	\end{proof}
	
	\section{Application to semi-algebraic zigzag persistence}	
	\label{sec:zigzag}
	In this section we discuss one application of the main result of the paper. 
	In the previous section we have designed an algorithm for effectively applying 
	the homology functor, $\HH_i(\cdot)$, to semi-algebraic maps between
	closed and bounded semi-algebraic sets. A next step  is 
	to effectively apply the homology functor to more general diagrams 
	(of semi-algebraic maps).
	
	One important class of diagrams that has been studied previously
	from an effective homology computation from the point of view were 
	diagrams of the form:
	\[
	S_0 \rightarrow S_1 \rightarrow \cdots \rightarrow S_n,
	\]
	where each $S_i$ is a closed and bounded semi-algebraic sets and all 
	maps are inclusions. This is the setting of persistent homology.
	The problem of computing the persistent homology groups (and the associated barcode)  of a filtration   of a given semi-algebraic set by the sub-level sets of 
	a semi-algebraic map was studied in \cite{basu-karisani-persistent}. It is proved in \cite{basu-karisani-persistent} that
	for each fixed $\ell > 0$, there exists an algorithm with singly exponential complexity that computes the first $\ell$-dimensional
	barcodes of such a filtration.
	
	The ideas introduced in the previous section allow us now to consider more general diagrams. Indeed 
	the notion of persistent homology has been generalized to arbitrary diagrams $D:P \rightarrow \mathbf{Top}$ (here $D$ is a functor from a 
	poset category $P$ to the category of topological spaces) \cite{Bubenik-et-al}.
	One particular class of poset diagrams that has been studied in the literature are the so called ``zigzag'' diagrams that we define below (see \cite{CarlssonSilva2010}). 
	\subsection{Zigzag diagrams}
	\label{subsec:zigzag}
	We begin by defining precisely zigzag diagrams.
	
	\begin{notation}
	\label{not:zigzag}
	We denote by $\mathbf{Z}_n$ the poset whose set of elements is  $[n]$,
	and whose Hasse diagram is indicated in the following figure.
	
\[
\xymatrix{
&1 \ar[ld] \ar[rd] && 3 \ar[ld] \ar[rd] && 5\ar[ld] \ar[rd] & \cdots& \\
0 && 2 && 4 && 6 & \cdots
}
\]
\end{notation}

\begin{definition}[Zigzag diagrams]
We call a functor $D:\mathbf{Z}_n \rightarrow \mathbf{SA}_{\R}$ from the poset 
category $\mathbf{Z}_n$ to the category of semi-algebraic sets 
and semi-algebraic maps a \emph{zigzag diagram}. 
\end{definition}

\begin{remark}
\label{rem:zigzag}
The zigzag diagrams that we consider in this paper where the arrows (maps) alternate in directions are not the most general possible. A general zigzag diagram need not have this alternation. Applying the homology functor to general zigzag diagrams gives rise to 
zigzag persistence modules which are precisely the quiver representations of 
quivers of type $\mathbf{A}$ (see \cite{CarlssonSilva2010}). We restrict to the case of alternating arrows (also called \emph{regular zigzag diagrams} for the ease of exposition and simplifying notation.
\end{remark}

We prove below that
for each fixed $\ell \geq 0$, there exists an
algorithm that given a zigzag functor 
$D:\mathbf{Z}_n \rightarrow \mathbf{SA}_{\R}$
(i.e. given quantifier-free formulas describing the semi-algebraic sets 
$D(i)$ and the graphs of the various maps $D(i) \rightarrow D(i-1), D(i) \rightarrow D(i+1)$ for every odd $i, 0\leq i \leq n$), computes 
a functor $D'$ from $\mathbf{Z}_n$ to the category of finite simplicial complexes,
such that the composition of $D'$ with the geometric realization
functor $|\cdot|$ is homologically $\ell$-equivalent to $D$, and all the morphisms
$D'(i) \rightarrow D'(i-1), D'(i) \rightarrow D'(i+1)$ are inclusions. Moreover, the complexity of the algorithm is bounded by $(n s d)^{k^{O(\ell)}}$,
where $s$ is the cardinality of the set of polynomials occurring in all the formulas in the input and $d$ their maximum degree.

The more precise statement is as follows.

\begin{theorem}
\label{thm:zigzag}
    For each fixed $\ell \geq 0$, there exists an algorithm with the
    following properties. The algorithm takes the following input:
    \begin{enumerate}[(a)]
    \item $R >0$;
        \item 
        a tuple of closed formulas
    $\Phi = (\phi_0,\ldots,\phi_n)$, with $S_i = \RR(\phi_i,B) \subset \R^k$ for $0 \leq i \leq n$, where $B = \overline{B_k(0,R)}$;
    \item
    a tuple of 
    closed formulas 
    $
    \Psi = (\psi_1,\ldots,\psi_n),
    $
    such that $\RR(\Psi_i, B \times B)$ is the graph of 
    a semi-algebraically continuous map $f_i:S_i \rightarrow S_{i-1}$
    if $i$ is odd, and is the graph of 
    a semi-algebraically continuous map $f_i:S_{i-1} \rightarrow S_{i}$ if $i$ is even.
    \end{enumerate}
    The algorithm produces as output simplicial complexes
    $\Delta_0,\ldots,\Delta_n$, having the following properties:
    \begin{enumerate}[(a)]
        \item 
        $\Delta_i$ is a subcomplex of $\Delta_{i-1}$ and  $\Delta_{i+1}$ if $i$ is odd, and $\Delta_{i-1},\Delta_{i+1}$ are subcomplexes of $\Delta_i$
    if $i$ is even.
        \item
        The diagram
    \[
    |\Delta_0| \hookleftarrow |\Delta_1| \hookrightarrow |\Delta_2| \cdots
    \]
    is homologically $\ell$-eqivalent to
    \[
    S_0 \leftarrow S_1 \rightarrow S_2 \cdots
    \]
    \end{enumerate}
     
    The complexity of the algorithm is bounded by $(n s d)^{k^{O(\ell)}}$,
where $s$ is the cardinality of the set of polynomials occurring in all the formulas in the input and $d$ their maximum degree.
\end{theorem}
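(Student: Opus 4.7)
The approach mirrors that of Theorem~\ref{thm:functor}: I would replace the zigzag diagram $D$ by an equivalent diagram whose arrows are all semi-algebraic inclusions, and then feed a tuple of closed formulas describing this inclusion diagram into the simplicial replacement algorithm of \cite{basu-karisani}. The technical heart is a zigzag analogue $\tcyl(D)$ of the mapping cylinder $\tcyl(f)$ from Definition~\ref{def:cylinder}.

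The construction would proceed as follows. For each odd index $i$ the set $S_i$ is the common source of the two maps $f_i:S_i \to S_{i-1}$ and $f_{i+1}:S_i \to S_{i+1}$. I would attach to $S_i$ a ``double mapping cylinder'' $\widetilde{C}_i$ built exactly as in~\eqref{eqn:tcyl} but using two independent cylinder parameters $\lambda^-,\lambda^+ \in [0,1]$ and two outgoing maps $f_i,f_{i+1}$: at $\lambda^-=\lambda^+=1$ sits a copy of $S_i$, at $\lambda^-=0$ a copy of $S_{i-1}$, and at $\lambda^+=0$ a copy of $S_{i+1}$. The zigzag mapping cylinder $\tcyl(D)$ is then obtained by identifying the $\widetilde{C}_i$ along the shared even-indexed $S_j$'s (each even $S_j$ appearing as the $\lambda^+=0$ slice of $\widetilde{C}_{j-1}$ and as the $\lambda^-=0$ slice of $\widetilde{C}_{j+1}$). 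Concretely this is encoded as the realization of a single existentially quantified formula $\Theta_D$, built from $\Phi,\Psi$ via the pattern of~\eqref{eqn:formula}, whose size is linear in $n$ and in the sizes of the input formulas.

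Once $\tcyl(D)$ is defined, an argument patterned on Propositions~\ref{prop:homotopy-equivalent}--\ref{prop:tcyl} shows that the obvious deformation retractions collapsing the $\lambda$-coordinates have cone-like (hence contractible) fibers, so the homological Vietoris--Begle theorem (Remark~\ref{rem:homology-vs-homotopy}) upgrades the pointwise statement to a diagrammatic homological equivalence between $D$ and the inclusion diagram $(S_i \hookrightarrow \tcyl(D))_i$. The algorithm then mimics Algorithm~\ref{alg:functor}: apply Algorithm~14.5 of \cite{BPRbook2} to eliminate the existential quantifier in $\Theta_D$; replace the resulting (closed but not necessarily closed-formula) set by a semi-algebraically homotopy equivalent set described by a closed formula via Notation~\ref{not:phi-star} and Proposition~\ref{prop:closed}; and finally feed the tuple $(\overline{\Theta_D},\overline{\phi_0},\ldots,\overline{\phi_n})$ into the simplicial replacement algorithm of \cite{basu-karisani}. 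The $\Delta_i$ of the theorem are then extracted as the subcomplexes corresponding to the closed formulas $\overline{\phi_i}$.

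The main obstacle I expect is verifying that $\tcyl(D)$ can be packaged so that (a) all $n$ semi-algebraic maps are turned into inclusions \emph{simultaneously} and the resulting inclusion diagram is diagrammatically homologically $\ell$-equivalent to $D$, and (b) the defining formula lives in an ambient space whose dimension grows only as $O(k)$ (and not as $O(kn)$), so that the subsequent simplicial replacement has complexity single-exponential in $k$ and polynomial in $n$, matching the stated bound $(nsd)^{k^{O(\ell)}}$. A subsidiary subtlety is that the gluings along the even-indexed $S_j$'s must be arranged so that all fibers of the retractions remain cone-like, which is what keeps the Vietoris--Begle argument applicable uniformly at every index.
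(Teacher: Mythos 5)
Your overall strategy --- replace the zigzag of maps by a diagram of inclusions via a mapping-cylinder construction, convert the defining formulas to closed ones, and then invoke the simplicial replacement algorithm of \cite{basu-karisani} --- is exactly the paper's. But the specific cylinder you build diverges from the paper's, and the two places where you flag worry are precisely where the gaps live, so they need to be resolved rather than noted.

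First, the ambient-dimension issue is not a "subsidiary" worry you can wave off: your construction attaches a \emph{double} mapping cylinder $\widetilde C_i$ with its own pair of parameters $(\lambda^-,\lambda^+)$ to every odd $i$, and glues the $\widetilde C_i$'s along the even-indexed $S_j$'s by identification. To realize the glued space as a single semi-algebraic set you must house all of these in one ambient $\R^N$, and with a fresh pair of cylinder coordinates per odd index the natural value of $N$ grows linearly in $n$. The simplicial-replacement complexity $(Nsd)^{k^{O(\ell)}}$ is single-exponential in the ambient dimension $k$, so if $k$ is replaced by something of size $O(kn)$ you do not get the claimed bound $(nsd)^{k^{O(\ell)}}$. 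The paper sidesteps this with a different construction (Definition~\ref{def:cylinder-general}): a \emph{single} auxiliary parameter $\mu\in[0,n]$, with each segment of the zigzag living over a disjoint subinterval of $[0,n]$, and with the odd-indexed replacement $\widetilde S_i$ being just a thin "tube" $\{(x,h_i(x,\mu),\mu)\}$ rather than a full double cone. The ambient space is $\R^k\times\R^k\times\R$, of dimension $2k+1$ independent of $n$, and the directed cylinders $\underset{\rightarrow}{\tcyl},\underset{\leftarrow}{\tcyl}$ on shifted intervals make the gluing automatic (set-theoretic union, no quotient). That trick is the real content missing from your sketch.

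Second, the extraction step is wrong as written. You propose to feed $(\overline{\Theta_D},\overline{\phi_0},\ldots,\overline{\phi_n})$ to the simplicial replacement algorithm and to "extract $\Delta_i$ as the subcomplex corresponding to $\overline{\phi_i}$." But $\overline{\phi_i}$ describes (an infinitesimal thickening of) the original $S_i\subset\R^k$, which lives in a \emph{different} ambient space than $\Theta_D$ and in any case does not carry the inclusion structure that the output zigzag must have. What must be fed in is a tuple $\widetilde\Phi=(\widetilde\phi_0,\ldots,\widetilde\phi_n)$ of closed formulas describing the pieces $\widetilde S_0,\ldots,\widetilde S_n\subset\R^k\times\R^k\times\R$ of $\tcyl(D)$, which by construction satisfy $\widetilde S_i\subset\widetilde S_{i\pm1}$ for odd $i$; the $\Delta_i$ are then built from the corresponding subcomplexes $K_i$ of the simplicial replacement. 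Finally, the diagrammatic Vietoris--Begle argument (Proposition~\ref{prop:zigzag}) also has to be checked with respect to explicit maps $g_j:\widetilde S_j\to S_j$ making all the squares commute, and with contractible fibers at every $j$; this is straightforward once the correct $\widetilde S_j$ are in hand, but it is not a formality for the two-parameter construction you propose, since the fibers of the retraction over even $j$ are more complicated there.
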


\subsection{Proofs of Theorems~\ref{thm:zigzag-persistence}, \ref{thm:zigzag2} and \ref{thm:zigzag}}
We first prove Theorem~\ref{thm:zigzag} which is the main theorem of this section.
Theorems~\ref{thm:zigzag-persistence} are \ref{thm:zigzag2} straightforward
consequences.

In order to prove Theorem~\ref{thm:zigzag}, we first introduce a more general
mapping cylinder construction which is adapted to the zigzag diagrams. For 
a zigzag diagram consisting of just one zigzag 
\[
\xymatrix{
S_{i-1} \ar[rd]^{f_i} && S_{i+1}\ar[ld]_{f_{i+1}} \\
&S_i&
}
\]
we would like to replace the diagram by the union of two mapping cylinders glued
along 
$S_i$
as shown in Figure~\ref{fig:cyl}.
The maps $f_{i}, f_{i+1}$ are replaced by inclusions of $S_{i-1}$ and $S_{i+1}$ into the mapping cylinders of $f_i$ and $f_{i+1}$, and $S_i$ is replaced by the union of the mapping cylinders of $f_i$ and $f_{i+1}$ (oriented in opposite directions and glued along $S_i$).

\begin{figure}[H]
	\begin{center}
		\includegraphics[scale=.8]{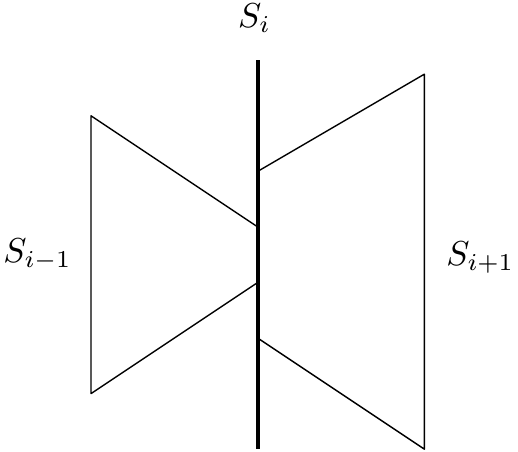}
		\caption{\small }
		\label{fig:cyl}
	\end{center}
\end{figure}

We now extend the above idea in two directions. We consider zigzag diagrams with a finite number of zigzags instead of just one,  
and instead of the classical mapping cylinder 
we use the semi-algebraic version introduced in Definition~\ref{def:cylinder}.

We first define directed versions of the semi-algebraic mapping cylinder construction.

\begin{definition}[Directed semi-algebraic mapping cylinder]
\label{def:cylinder-directed}
    Let $S \subset \R^k, T \subset \R^k$ be semi-algebraic subsets and 
    $f:S \rightarrow T$ a semi-algebraic map, and $a,b \in \R,  a < b$.
    We denote:
    \begin{eqnarray*}
    \label{eqn:cyl-modified}
				\underset{\rightarrow}{\tcyl}(f)(a,b) &=& \{ ((\lambda - a)/(b-a) \cdot x, f(x), \lambda) \ | \ x \in S, \lambda \in [a, b]\} \cup \\
				&&\{ (\mathbf{0}, y, a) \ | \ y \in T\},\\
				\underset{\leftarrow}{\tcyl}(f)(a,b) &=& \{ ((b -\lambda)/(b-a) \cdot x, f(x), \lambda) \ | \ x \in S, \lambda \in [a, b]\} \cup \\
				&&\{ (\mathbf{0}, y, b) \ | \ y \in T\}.
	\end{eqnarray*}
(Note that $\tcyl(f) = \underset{\rightarrow}{\tcyl}(f)(0,1)$.) 
\end{definition}

We now define semi-algebraic mapping cylinders of zigzag diagrams.
\begin{definition}[Semi-algebraic mapping cylinder of zigzag diagrams]
\label{def:cylinder-general}
Let $D:\mathbf{Z}_n \rightarrow \mathbf{SA}_{\R}$ be a zigzag diagram and 
for $0 \leq i \leq n$, let $S_i = D(i)$, and for $1 \leq i \leq n$, let $f_i$
denote the map
$f_i: S_{i-1} \rightarrow S_i$.

For $0 <  i  \leq n$, with $i$ odd we define
\[
\widetilde{S}_i = \begin{cases}
\{ (x, h_i(x,\mu), \mu) \mid x \in S_i, \mu \in [i - \tfrac{1}{2}, i+ \tfrac{1}{2}]\}, &i\neq n, \\
\{ (x, (\mu - n + \tfrac{3}{2}) \cdot f_{i}(x), \mu) \mid x \in S_n, \mu \in [n - \tfrac{1}{2}, n]\} & i = n,
\end{cases}
\]
where
\[
h_i(x,\mu) = (\mu -i + \tfrac{3}{2}) \cdot f_{i}(x) + (\mu -i +\tfrac{1}{2}) \cdot f_{i+1}(x).
\]

For $0 \leq  i  \leq n$, with $i$ even,  we define
\[
\widetilde{S}_i = \begin{cases}
\underset{\leftarrow}{\tcyl}(f_{i})(i - \tfrac{1}{2},i) \cup \underset{\rightarrow}{\tcyl}(f_{i+1})(i,i+\tfrac{1}{2}) \cup \widetilde{S}_{i-1} \cup \widetilde{S}_{i+1}
& i \neq 0,n, \\
\underset{\rightarrow}{\tcyl}(f_{1})(0,\tfrac{1}{2}) \cup  \widetilde{S}_{1} & 
i=0, \\
\underset{\leftarrow}{\tcyl}(f_{n})(n - \tfrac{1}{2},n)  \cup \widetilde{S}_{n-1}
& i=n.
\end{cases}
\]

We denote the diagram 
    \[
\xymatrix{
&\widetilde{S}_1 \ar[ld] \ar[rd] && \widetilde{S}_3 \ar[ld] \ar[rd] && \widetilde{S}_5\ar[ld] \ar[rd] & \cdots& \\
\widetilde{S}_0 && \widetilde{S}_2 && \widetilde{S}_4 && \widetilde{S}_6 & \cdots
}
\]
where the arrows denote inclusions, 
by $\tcyl(D)$.
\end{definition}

Suppose in Definition~\ref{def:cylinder-general} each $S_i$ is a closed and bounded semi-algebraic subset of $\R^k$. 
Notice the following (see also the schematic Figure~\ref{fig:cyl-general}).
\begin{enumerate}[(a)]
\item For each even $i$,  $\widetilde{S}_{i-1}, \widetilde{S}_{i+1} \subset \widetilde{S}_i$
(using the convention that $\widetilde{S}_{-1} = \widetilde{S}_{n+1} = \emptyset$).
\item For each odd $i$, the map $\widetilde{S}_i \rightarrow S_i, (x,y,\mu) \mapsto x$ is
a homological equivalence using the homological version of the Vietoris-Begle theorem.
\item
The union $\bigcup_{0 \leq i \leq n} \widetilde{S}_i$ is a closed and bounded semi-algebraic subset of
$\R^k \times \R^k \times [0,n]$.
\item For $T \subset \R^k \times \R^k \times\R$, and $\mu \in \R$,  
let $T_\mu$ denote the subset of $T$ with last coordinate equal to $\mu$. Then
for each even $i$, we have 
\begin{eqnarray*}
\left(\widetilde{S}_{i-1}\right)_{i-\tfrac{1}{2}} &=& \left(\underset{\leftarrow}{\tcyl}(f_i)(i-\tfrac{1}{2},i)\right)_{i - \tfrac{1}{2}}, \\
\left(\widetilde{S}_{i+1}\right)_{i+\tfrac{1}{2}} &=& \left(\underset{\rightarrow}{\tcyl}(f_{i+1})(i-\tfrac{1}{2},i)\right)_{i + \tfrac{1}{2}}, \\
\left(\widetilde{S}_i\right)_i &=& \mathbf{0} \times S_i \times \{i\} \cong S_i.
\end{eqnarray*}
\end{enumerate}

\begin{figure}[H]
	\begin{center}
		\includegraphics[scale=.9]{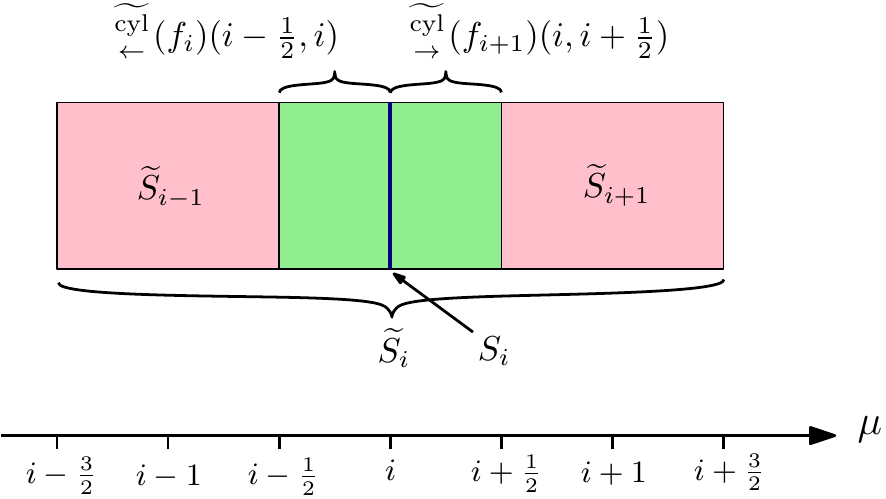}
		\caption{\small $\widetilde{S}_{i-1}, \widetilde{S}_i, \widetilde{S}_{i+1}$ for every even $i$.}
		\label{fig:cyl-general}
	\end{center}
\end{figure}

The following proposition captures the key property of $\tcyl(D)$. 
We use the same notation introduced in
Definition~\ref{def:cylinder-general},

\begin{proposition}
\label{prop:zigzag}
Let $D:\mathbf{Z}_n \rightarrow \mathbf{SA}_{\R}$ be a zigzag diagram and 
for $0 \leq i \leq n$  $S_i = D(i)$ is a closed and bounded semi-algebraic subset of $\R^k$.
Then the diagrams $D$ and $\tcyl(D)$ are homologically equivalent. 
\end{proposition}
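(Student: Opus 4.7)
The plan is to exhibit a natural transformation $r : \tcyl(D) \Rightarrow D$ whose components $r_i : \widetilde{S}_i \to S_i$ are all homological equivalences; such data witnesses the homological equivalence of the two functors $\mathbf{Z}_n \to \mathbf{SA}_{\R}$ in the evident generalization of Definition~\ref{def:ell-equivalent} to diagrams, since both $D$ and $\tcyl(D)$ send each morphism of $\mathbf{Z}_n$ to a map in the same direction (odd index $\to$ even index).

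First I would define the retractions $r_i$ explicitly. For odd $i$, let $r_i(x, y, \mu) = x$ be the projection to the first coordinate, so that $r_i$ maps $\widetilde{S}_i = \{(x, h_i(x,\mu), \mu) : x \in S_i,\, \mu \in [i-\tfrac{1}{2}, i+\tfrac{1}{2}]\}$ onto $S_i$. For even $i$, I would define $r_i$ piecewise on the four constituents of $\widetilde{S}_i$: on each of the semi-algebraic mapping cylinders $\underset{\leftarrow}{\tcyl}(f_i)(i-\tfrac{1}{2}, i)$ and $\underset{\rightarrow}{\tcyl}(f_{i+1})(i, i+\tfrac{1}{2})$ use the standard mapping-cylinder retraction $(z, y, \lambda) \mapsto y$ (both target $S_i$), and on the adjacent odd slivers $\widetilde{S}_{i-1}, \widetilde{S}_{i+1} \subset \widetilde{S}_i$ set $(x, h_{i-1}(x, \mu), \mu) \mapsto f_i(x)$ and $(x, h_{i+1}(x, \mu), \mu) \mapsto f_{i+1}(x)$ respectively; both land in $S_i$ since $f_i$ and $f_{i+1}$ are the two structure maps of the zigzag pointing into $S_i$. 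The overlaps occur only at the gluing slices $\mu = i \pm \tfrac{1}{2}$, where the cylinder formulas and the boundary values of $h_{i \pm 1}$ coincide by construction, so the pieces assemble into a continuous semi-algebraic surjection.

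Next, I would invoke the homological Vietoris--Begle theorem (Remark~\ref{rem:homology-vs-homotopy}) to reduce the claim that each $r_i$ is a homological equivalence to showing that every fiber is acyclic. For odd $i$ the fiber $r_i^{-1}(x)$ is a semi-algebraic arc, hence contractible. For even $i$ and $y_0 \in S_i$, the fiber is the union of two cones sitting inside the mapping cylinders---cones on $f_i^{-1}(y_0)$ and $f_{i+1}^{-1}(y_0)$ sharing the single apex $(\mathbf{0}, y_0, i)$---together with two cylindrical pieces of the form $f_i^{-1}(y_0) \times [i-\tfrac{3}{2}, i-\tfrac{1}{2}]$ and $f_{i+1}^{-1}(y_0) \times [i+\tfrac{1}{2}, i+\tfrac{3}{2}]$ inherited from the slivers $\widetilde{S}_{i-1}, \widetilde{S}_{i+1}$. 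Each cylindrical piece deformation retracts onto the base of the adjacent cone, so the fiber collapses to a wedge of two cones joined at the common apex, which is contractible; the boundary cases ($i = 0$ or $i = n$, or $y_0$ missing some of the images) only drop pieces and preserve contractibility.

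Finally, the naturality of $\{r_i\}$ is essentially built into the definition: for a morphism $\alpha : i \to j$ in $\mathbf{Z}_n$ with $i$ odd and $j \in \{i-1, i+1\}$ even, a point $q = (x, h_i(x, \mu), \mu) \in \widetilde{S}_i$ satisfies $r_j(\iota(q)) = D(\alpha)(x) = D(\alpha)(r_i(q))$ precisely because $r_j$ was defined on the sliver $\widetilde{S}_i \subset \widetilde{S}_j$ to return the structure-map image of the first coordinate. The main obstacle I expect is the fiber analysis at even indices: carrying out the contractibility argument across the various boundary and degeneracy cases, and verifying that all the piecewise formulas glue correctly along the slices $\mu = i \pm \tfrac{1}{2}$, is fiddly bookkeeping but not conceptually deep.
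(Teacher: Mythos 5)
Your proposal follows essentially the same route as the paper: defining, for each index $i$, an explicit retraction $\widetilde{S}_i \to S_i$ (the first coordinate for odd $i$; the second coordinate on the cylinders and $f_i(x)$, $f_{i+1}(x)$ on the two adjacent slivers for even $i$), checking that these assemble into a map of diagrams, and then invoking the homological Vietoris--Begle theorem by inspecting fibers. The only difference is that you flesh out the fiber analysis at even indices, which the paper leaves as an "easy exercise."
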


\hide{
Before proving Proposition~\ref{prop:zigzag} we need some preliminary results.

\begin{proposition}
\label{prop:injective-general}
Let 
\[
i^{\rightarrow}: S \rightarrow \underset{\rightarrow}{\tcyl}(f)(a,b),
\]
(resp. $ i^{\leftarrow}: S \rightarrow \underset{\leftarrow}{\tcyl}(f)(a,b)$)
be the injective map $x \mapsto(x, f(x), b)$
(resp. $x \mapsto(x, f(x), a)$).

Similarly, let 
\[
r^{\rightarrow}: \underset{\rightarrow}{\tcyl}(f)(a,b) \rightarrow T,
\]
(resp. $ r^{\leftarrow}: \underset{\leftarrow}{\tcyl}(f)(a,b) \rightarrow T$)
be the map $(\cdot, y, \cdot) \mapsto y$
(resp. $(\cdot, y, \cdot) \mapsto y$).

Then the following diagrams are commutative:

\begin{equation}
				\begin{tikzcd}
					S \arrow[r, "f"] \arrow[d, hook, "i^{\rightarrow}"] & T \arrow[d, "id"] \\
					\underset{\rightarrow}{\tcyl}(f)(a,b)\arrow[r, "r^{\rightarrow}"]& T
				\end{tikzcd}
\end{equation}

\begin{equation}
				\begin{tikzcd}
					S \arrow[r, "f"] \arrow[d, hook, "i^{\leftarrow}"] & T \arrow[d, "id"] \\
					\underset{\leftarrow}{\tcyl}(f)(a,b)\arrow[r, "r^{\leftarrow}"]& T
				\end{tikzcd}
\end{equation}
\end{proposition}	

\begin{proof}
Similar to the proof of Proposition~\ref{prop:injective}.
\end{proof}

\begin{proposition}
\label{prop:homotopy-equivalent-general}
Suppose that $S$ is closed and bounded.
The maps $r^{\leftarrow}$ and $r^{\leftarrow}$ are homological equivalences.
\end{proposition}

\begin{proof}
Similar to the proof of Proposition~\ref{prop:homotopyequivalent}.
\end{proof}

\begin{corollary}
\label{cor:injective-general}
Suppose that $S$ is closed and bounded. Then the inclusion maps,
$i^{\rightarrow}(S) \hookrightarrow \underset{\rightarrow}{\tcyl}(f)(a,b)$,
$i^{\leftarrow}(S) \hookrightarrow \underset{\leftarrow}{\tcyl}(f)(a,b)$,
are homologically equivalent to
$f:S \rightarrow T$.
\end{corollary}

\begin{proof}
Follows directly from Propositions~\ref{prop:homotopy-equivalent-general} and
\ref{prop:injective-general}.
\end{proof}
}
\begin{proof}
For $0 \leq j \leq n$, we define  $g_j: \widetilde{S}_j\rightarrow {S}_j$ be defined as follows.

\[
g_j(x,y,\mu) = \begin{cases}
y & \text{ if } j \text{ is even, }  j-\tfrac{1}{2} \leq \mu \leq j+\tfrac{1}{2}, \\ 
f_{j}(x)& \text{ if } j \text{ is even, } j\neq 0,  j-\tfrac{3}{2} \leq \mu \leq j-\tfrac{1}{2}, \\ 
f_{j+1}(x)& \text{ if } j \text{ is even, } j\neq n,  j+ \tfrac{1}{2} \leq \mu \leq j+\tfrac{3}{2}, \\ 
x& \text{ if } j \text{ is odd. }
\end{cases}
\]

It is now easy to check that the for each  (even) $j$ the following diagram commutes:
\[
\xymatrix{
\widetilde{S}_{j-1}\ar[dd]^{g_{j-1}} \ar[rd] && \widetilde{S}_{j+1}\ar[dd]^{g_{j+1}} \ar[ld] \\
& \widetilde{S}_{j}\ar[dd]^{g_j} & \\
{S}_{j-1} \ar[rd]^{f_{j-1}}&& {S}_{j+1}\ar[ld]^{f_j} \\
& {S}_{j} & \\
}
\]

Finally, using the homological version of the Vietoris-Begle theorem it is an easy exercise to check that each $g_j, 0 \leq j \leq n$ is a homological equivalence.
This proves the proposition.
\end{proof}

\begin{proof}[Proof of Theorem~\ref{thm:zigzag}]
Let $D$ denote the zigzag diagram 
\[
\xymatrix{
&{S}_1 \ar[ld]^{f_1} \ar[rd]_{f_2} && {S}_3 \ar[ld]^{f_3} \ar[rd]_{f_4} && {S}_5\ar[ld]^{f_5} \ar[rd]_{f_6} & \cdots& \\
{S}_0 && {S}_2 && {S}_4 && {S}_6 & \cdots
}
.
\]
Using Algorithm 14.5 in \cite{BPRbook2} (Quantifier Elimination) and following
Definition~\ref{def:cylinder-general} we can compute using the input tuples
of formulas $\Phi$ and $\Psi$ 
a tuple $\widetilde{\Phi} = (\widetilde{\phi}_0,\ldots,\widetilde{\phi}_n)$ 
whose realization is $\tcyl(D)$. Finally we replace the tuple of formulas
$\widetilde{\Phi}$ by a tuple of closed formulas
\[
\overline{\widetilde{\Phi}}= (\overline{\widetilde{\phi}_0},\ldots,\overline{\widetilde{\phi}_n})
\]
(recall Notation~\ref{not:phi-star}).
The number of polynomials and their degrees appearing in the
$\overline{\widetilde{\Phi}}$
are all bounded singly exponentially. We then use the Algorithm for simplicial replacement
to compute the simplicial complexes, $\Delta, \Delta_0,\ldots,\Delta_n$ having the required
properties.
\end{proof}

\begin{proof}[Proof of Theorem~\ref{thm:zigzag2}]
Theorem~\ref{thm:zigzag2} follows from Theorem~\ref{thm:zigzag}
using standard algorithms from linear algebra.
\end{proof}

\begin{proof}[Proof of Theorem~\ref{thm:zigzag-persistence}]
Use Theorem~\ref{thm:zigzag2} to reduce the computation of the barcode of a zigzag diagram
	of semi-algebraic maps to that of finite dimensional vector spaces, and
	then use the algorithm in \cite[Section 4.2]{CarlssonSilva2010}.
	The complexity remains singly exponentially bounded.
\end{proof}

\section{Conclusion and open problems}
\label{sec:conclusion}
In this paper we have described for each $\ell \geq 0$, algorithms with singly exponential complexity for computing the homology functor $\HH_i(\cdot)$ for $0 \leq i \leq \ell$
on semi-algebraic maps and zigzag diagrams
on closed and bounded semi-algebraic sets.

We end with some open problems. 
\begin{enumerate}[1.]
    \item Is it possible to compute the homology functor with singly exponential complexity without having to restrict to only the first few dimensions ?
    
    \item Remove the  assumption that all the semi-algebraic sets in the input are closed and bounded. One possible approach is to generalize the results of Gabrielov and Vorobjov \cite{GaV} on replacing an arbitrary semi-algebraic set by a locally closed one without changing its  homotopy type, to semi-algebraic maps.
    
    \item Is it possible to extend the numerical algorithms mentioned in the Introduction  for computing Betti numbers of semi-algebraic sets to the functor setting ? It will be necessary to study the  condition number of semi-algebraic maps or more general diagrams ?
    
    \item Study the categorical complexity 
    of the semi-algebraic homology functor, and prove a singly exponential bound on its
    functor complexity (as defined in \cite{Basu-Isik}).
\end{enumerate}

	\bibliographystyle{amsplain}
	\bibliography{master}
\end{document}